\newcount\Comments  
\Comments=1
\documentclass[letterpaper,10pt]{scrartcl}

\usepackage[utf8]{inputenc}
\usepackage[english]{babel}
\usepackage[automark]{scrpage2}
\usepackage{amsmath,amssymb,upref}
\usepackage{amsfonts}
\usepackage{amsthm,array,makecell}
\usepackage{color}
\usepackage{graphicx}
\usepackage{sidecap}
\usepackage{multirow}
\usepackage{booktabs}
\usepackage{fullpage}

\pagestyle{scrheadings}
\usepackage[font=small]{caption}
\usepackage{subcaption}

\usepackage{longtable}
\usepackage{rotating}

\usepackage{algorithm}
\usepackage{algcompatible}
\extrafloats{100}
\usepackage{lineno}
\usepackage[hidelinks]{hyperref}



\newcommand{\R}{\mathbb{R}}

\newcommand{\bfx}{\boldsymbol x}

\newcommand{\bff}{\boldsymbol f}
\newcommand{\bfg}{\boldsymbol g}

\newcommand{\Gcal}{\mathcal{G}}

\newcommand{\Tcal}{\mathcal{T}}

\newcommand{\Vcal}{\mathcal{V}}

\newcommand{\bfb}{\boldsymbol b}
\newcommand{\bfc}{\boldsymbol c}

\newcommand{\bfo}{\boldsymbol o}

\newcommand{\bfr}{\boldsymbol r}
\newcommand{\bfu}{\boldsymbol u}

\newcommand{\bfv}{\boldsymbol v}
\newcommand{\bfw}{\boldsymbol w}
\newcommand{\bfz}{\boldsymbol z}

\newcommand{\PDEdomain}{\Omega}
\newcommand{\ROMOutput}{\tilde{y}}
\newcommand{\StErrBnd}{\Delta^{\bfw}}
\newcommand{\OutErrBnd}{\Delta^{y}}

\newcommand{\bfA}{\boldsymbol A}
\newcommand{\bfB}{\boldsymbol B}
\newcommand{\bfC}{\boldsymbol C}
\newcommand{\bfD}{\boldsymbol D}

\newcommand{\bfF}{\boldsymbol F}
\newcommand{\bfG}{\boldsymbol G}

\newcommand{\bfI}{\boldsymbol I}

\newcommand{\bfK}{\boldsymbol K}

\newcommand{\bfM}{\boldsymbol M}

\newcommand{\bfR}{\boldsymbol R}

\newcommand{\bfV}{\boldsymbol V}

\newcommand{\bfQ}{\boldsymbol Q}

\newcommand{\bfW}{\boldsymbol W}

\newcommand{\bfDtrain}{\bfD^{\text{train}}}
\newcommand{\bfGtrain}{\bfG^{\text{train}}}
\newcommand{\bfgtrain}{\bfg^{\text{train}}}
\newcommand{\bfGtest}{\bfG^{\text{test}}}
\newcommand{\bfgtest}{\bfg^{\text{test}}}
\newcommand{\bfGbasis}{\bfG^{\text{basis}}}
\newcommand{\bfGnorm}{\bfG^{\text{norm}}}
\newcommand{\bfgbasis}{\bfg^{\text{basis}}}
\newcommand{\bfWbasis}{\bfW^{\text{basis}}}
\newcommand{\bfwbasis}{\bfw^{\text{basis}}}

\newcommand{\bfwtrain}{\bfw^{\text{train}}}
\newcommand{\rbfWtrain}{\rbfW^{\text{train}}}
\newcommand{\rbfwtrain}{\rbfw^{\text{train}}}
\newcommand{\bfWtest}{\bfW^{\text{test}}}
\newcommand{\bfwtest}{\bfw^{\text{test}}}
\newcommand{\tbfWtest}{\tbfW^{\text{test}}}
\newcommand{\tbfwtest}{\tbfw^{\text{test}}}

\newcommand{\bbfWtest}{\breve{\bfW}^{\text{test}}}
\newcommand{\hatbfWtest}{\hat{\bfW}^{\text{test}}}
\newcommand{\tbfrtest}{\tilde{\bfr}^{\text{test}}}
\newcommand{\hatbfrtest}{\hat{\bfr}^{\text{test}}}
\newcommand{\rbfRtrain}{\rbfR^{\text{train}}}
\newcommand{\rbfrtrain}{\rbfr^{\text{train}}}
\newcommand{\bfrtest}{\bfr^{\text{test}}}
\newcommand{\bfPsitrain}{\bfPsi^{\text{train}}}

\newcommand{\ubasis}{u^{\text{basis}}}
\newcommand{\utest}{u^{\text{test}}}
\newcommand{\bfutest}{\bfu^{\text{test}}}
\newcommand{\bfubasis}{\bfu^{\text{basis}}}
\newcommand{\bfutrain}{\bfu^{\text{train}}}
\newcommand{\ErrPerf}{e}
\newcommand{\ErrPerfBnd}{\Delta}

\DeclareMathOperator{\vech}{vech}
\DeclareMathOperator{\vect}{vec}
\DeclareMathOperator{\diag}{diag}

\newcommand{\ProbLB}{P^{\text{LB}}}


\newcommand{\nh}{N}
\newcommand{\nr}{n}

\newcommand{\bftheta}{\boldsymbol{\theta}}
\newcommand{\bfPsi}{\boldsymbol{\Psi}}

\newcommand{\rbfR}{\bar{\bfR}}
\newcommand{\rbfr}{\bar{\bfr}}

\newcommand{\hbfM}{\hat{\bfM}}

\newcommand{\hbfA}{\hat{\bfA}}
\newcommand{\hbfB}{\hat{\bfB}}

\newcommand{\tbfA}{\tilde{\bfA}}
\newcommand{\tbfB}{\tilde{\bfB}}

\newcommand{\rbfw}{\bar{\bfw}}
\newcommand{\rbfW}{\bar{\bfW}}

\newcommand{\hbfo}{\hat{\bfo}}

\newcommand{\tbfw}{\tilde{\bfw}}

\newcommand{\tbfW}{\tilde{\bfW}}

\newcommand{\bfZ}{\boldsymbol{Z}}

\newcommand{\bfTheta}{\boldsymbol \Theta}

\newcommand{\kibitz}[2]{\ifnum\Comments=1\textcolor{#1}{#2}\fi}

\newenvironment{keywords}%
   {\begin{trivlist}\item[]{\bfseries\sffamily Keywords:}\ }
   {\end{trivlist}}

\ihead[]{}
\chead[]{}
\ohead[]{}
\ifoot[]{}
\cfoot[]{\pagemark}
\ofoot[]{}

\newtheorem{theorem}{Theorem}

\newtheorem{definition}[theorem]{Definition}

\newtheorem{lemma}[theorem]{Lemma}

\newtheorem{proposition}[theorem]{Proposition}

\newtheorem{remark}[theorem]{Remark}

\numberwithin{equation}{section}
\renewcommand{\theequation}{\arabic{section}.\arabic{equation}}

\title{Probabilistic error estimation for non-intrusive reduced models learned from data of systems governed by linear parabolic partial differential equations}

\author{Wayne Isaac Tan Uy and Benjamin Peherstorfer\thanks{\{wayne.uy,pehersto\}@cims.nyu.edu, Courant Institute of Mathematical Sciences, New York University, New York, NY 10012}}

\begin{document}

\maketitle

\begin{abstract}
This work derives a residual-based \emph{a posteriori} error estimator for reduced models learned with non-intrusive model reduction from data of high-dimensional systems governed by linear parabolic partial differential equations with control inputs.
It is shown that quantities that are necessary for the error estimator can be either
obtained exactly as the solutions of least-squares problems in a non-intrusive way from data such as initial conditions, control inputs, and high-dimensional solution trajectories or bounded in a probabilistic sense.
The computational procedure follows an offline/online decomposition. In the offline (training) phase, the high-dimensional system is judiciously solved in a black-box fashion to generate data and to set up the error estimator. In the online phase, the estimator is used to bound the error of the reduced-model predictions for new initial conditions and new control inputs without recourse to the high-dimensional system.
Numerical results demonstrate the workflow of the proposed approach from data to reduced models to certified predictions.
\end{abstract}

\begin{keywords}model reduction, error estimation, non-intrusive model reduction, small sample statistical estimates\end{keywords}

\section{Introduction}
Model reduction constructs reduced models that rapidly approximate solutions of differential equations by solving in problem-dependent, low-dimensional subspaces of classical, high-dimensional (e.g., finite-element) solution spaces \cite{RozzaPateraSurvey,paper:BennerGW2015,Quarteroni2011,book:HesthavenRS2016,doi:10.1080/00207170410001713448}. Traditional model reduction methods typically are intrusive in the sense that full knowledge about the underlying governing equations and their discretizations are required to derive reduced models. In contrast, this work considers non-intrusive model reduction that aims to learn reduced models from data with only little knowledge about the governing equations and their discretizations. However, constructing reduced models is only one aspect of model reduction. Another aspect is deriving \emph{a posteriori} error estimators that bound the error of reduced-model predictions with respect to the high-dimensional solutions that are obtained numerically with, e.g., finite-element methods  \cite{paper:PrudhommeRVMMPT2001,COCV_2002__8__1007_0,veroy_posteriori_2003,doi:10.1002/fld.867,paper:GreplP2005,paper:HaasdonkO2011,HaasdonkError}. This work builds on \emph{a posteriori} error estimators \cite{paper:GreplP2005,paper:HaasdonkO2011} from intrusive model reduction to establish error estimation for reduced models that are learned with non-intrusive methods. The key contribution is to show that all quantities required for deriving the error estimator can be either obtained in a non-intrusive way via least-squares regression from initial conditions, control inputs, and solution trajectories or bounded in a probabilistic sense, if the system of interest is known to be governed by a linear parabolic partial differential equation (PDE) with control inputs.
The key requirement to make the estimator practical is that the high-dimensional system is queryable in the sense that during a training (offline) phase one has access to a black box that one can feed with initial conditions and inputs and that returns the corresponding numerical approximations of the high-dimensional solution trajectories. If one considers learning reduced models from data as a machine learning task, then the proposed error estimator can be considered as pre-asymptotic computable generalization bound \cite{FoundationsML} of the learned models because the proposed estimator provides an upper bound on the error of the reduced model for initial conditions and inputs that have not been seen during learning (training) the reduced model and the error-estimator quantities. The bound is pre-asymptotic with respect to the number of data points and the dimension of the reduced model.

We review literature on non-intrusive and data-driven model reduction. First, the systems and control community has developed methods for identifying dynamical systems from frequency-response or impulse-response data, e.g., the Loewner approach by Antoulas and collaborators \cite{paper:AntoulasGI2016,paper:GoseaA2018,paper:IonitaA2014}, vector fitting \cite{772353,paper:DrmacGB2015}, and eigensystem realization \cite{doi:10.2514/3.20031,doi:10.1137/17M1137632}. In contrast, our approach will learn from time-domain data; not necessarily impulse-response data.
Second, dynamic mode decomposition~\cite{SchmidDMD,FLM:7843190,FLM:6837872,Tu2014391,NathanBook} has been shown to successfully derive linear dynamical systems that best fit data in the $L_2$ norm. However, the authors are unaware of error estimators for models derived with dynamic mode decomposition. Third, there is operator inference \cite{paper:PeherstorferW2016} that coincides with dynamic mode decomposition in case of linear systems but is also applicable to data from systems with nonlinear terms; see also the work on lift \& learn for general nonlinear systems \cite{QIAN2020132401} and the work on dynamic reduced models \cite{pehersto15dynamic}. The error estimators proposed in the following will build on operator inference for non-intrusive model reduction because, together with a particular data-sampling scheme \cite{paper:Peherstorfer2019}, operator inference exactly recovers the reduced models that are obtained via traditional intrusive model reduction. Thus, the learned models are the traditional reduced models with well-studied properties known from intrusive model reduction.

We now review literature on error estimators developed for intrusive model reduction. First, the reduced-basis community has developed error estimators for elliptic PDEs \cite{paper:PrudhommeRVMMPT2001} and parabolic PDEs \cite{paper:GreplP2005} with affine parameter dependence, time-dependent viscous Burgers' equation \cite{paper:NguyenRP2009,paper:JanonNP2013}, and linear evolution equations \cite{haasdonk_ohlberger_2008,paper:HaasdonkO2011}, among others. For systems that are nonlinear and/or have non-affine parameter dependence, error bounds have been established for reduced models with empirical interpolation in, e.g.,  \cite{paper:EftangGP2010,paper:HaasdonkOR2008,paper:ChaturantabutS2012,paper:WirtzSH2014}. These error estimators typically depend on the dual norm of the reduced-model residual and on other quantities of the underlying PDE discretizations such as coercivity and inf-sup stability constants \cite{paper:HuynhRSP2007} that require knowledge about the weak form of the governing equations that are unavailable in the setting of non-intrusive model reduction where one has access to data alone.
The work \cite{paper:SmetanaZP2019} proposes a probabilistic error bound involving randomized residuals which overcomes the need to compute constants in the error estimators; however, the reduced models are constructed with traditional intrusive model reduction and, in particular, residuals are computed in an intrusive way which conflicts with non-intrusive model reduction.
In the systems and control community, the discrepancy between the high-dimensional solutions of systems of ordinary differential equations and reduced-model solutions is bounded in terms of the transfer functions, see, e.g., \cite{paper:FengAB2017,doi:10.1137/140998603}.

This manuscript is organized as follows: Section~\ref{sec:Prelim} outlines preliminaries on spatial and temporal discretization of linear parabolic PDEs and intrusive model reduction. Section~\ref{sec:CertifiedROM} describes the proposed error estimator for reduced models learned with operator inference from data. First, least-squares problems are derived to infer residual-norm operators from data. Second, constants required for error estimation are bounded in a probabilistic sense. These two novel components are combined together with an intrusive error estimator \cite{paper:HaasdonkO2011} into a computational procedure that realizes the full workflow from data to reduced models to certification of reduced-model predictions, under certain conditions that are made precise. Numerical results are presented in Section~\ref{sec:NumEx} and conclusions are drawn in Section~\ref{sec:Concl}.

\section{Preliminaries} \label{sec:Prelim}

Section~\ref{subsec:ParabolicPDE} reviews linear parabolic PDEs with spatial and time discretization discussed in Sections~\ref{subsec:spatialDiscr} and~\ref{subsec:timeDiscr}, respectively. The continuous-time problem is transformed into a discrete linear time-invariant system. Intrusive model reduction is then recalled in Section~\ref{subsec:IntMR}.  Section~\ref{subsec:ProbForm} outlines the problem formulation.

\subsection{Linear parabolic PDEs with time-independent coefficients} \label{subsec:ParabolicPDE}

Let $\PDEdomain \subset \mathbb{R}^m$ be a bounded domain and let $\Tcal = (0, T)$ be a time interval with $T>0$ fixed. Consider the linear parabolic PDE on $(\bfx,t) \in \PDEdomain \times \Tcal$ given by
\begin{align} \label{eq:linearParabolicPDE}
\frac{\partial}{\partial t}w(\bfx,t) & =  \nabla \cdot (\bfb^T (\bfx) \nabla w(\bfx,t)) - \bfc(\bfx) \cdot \nabla w(\bfx,t) - d(\bfx) w(\bfx,t) + R(\bfx), \\
w(\bfx,t) & = 0 \text{\,\,\, for \,\,\,} \bfx \in \Gamma_D, \notag \\
\bfb^T(\bfx) \nabla w(\bfx,t) \cdot \mathbf{n} & = u_j(t) \text{\,\,\, for \,\,\,} \bfx \in \Gamma^j_N, \,\, j = 1,\dots,n_{\Gamma}, \notag \\
w(\bfx,0) & = h(\bfx), \notag
\end{align}
where $w: \Omega \times \Tcal \to \mathbb{R}$ is the solution,  $\bfb: \PDEdomain \rightarrow \R^{m \times m},  \bfc: \PDEdomain \rightarrow \R^m, d: \PDEdomain \rightarrow {\R}$ are time-independent coefficients, $R : \PDEdomain \to \mathbb{R}$ is the source term and the boundary $\partial \PDEdomain$ is decomposed into the $n_{\Gamma}$ disjoint segments $\cup_{j=1}^{n_{\Gamma}} \Gamma_N^j =\Gamma_N$ with Neumann conditions and the remaining portion $\Gamma_D$ with Dirichlet condition. The control inputs are $\{u_j(t)\}_{j=1}^{n_{\Gamma}}$ for $t \in \Tcal$. Define $[\cdot]_i$ as the $i$-th component of a vector and $[\cdot]_{ij}$ as the $(i,j)$-th component of a matrix. Let further $[\bfb]_{ij}, [\bfc]_i,d \in L^{\infty}(\PDEdomain)$ for $i,j \in \{1,\dots,m \}$, $R,h \in L^2(\PDEdomain)$, and $u_j \in L^2(\Tcal)$ for $j=1,\dots,n_{\Gamma}$ where $L^2,L^{\infty}$ correspond to the space of square-integrable and essentially bounded measurable functions, respectively. For~\eqref{eq:linearParabolicPDE} to be parabolic, it is required that for any $\boldsymbol{\xi} \in \R^m$ and $\boldsymbol{x} \in \PDEdomain$, there exists a constant $\theta > 0$ such that $\boldsymbol{\xi}^T \bfb(\bfx) \boldsymbol{\xi} \ge \theta \|\boldsymbol{\xi}\|_2^2$ \cite{book:Evans2010}.

\subsection{Spatial discretization} \label{subsec:spatialDiscr}

For the Sobolev space $H^1(\Omega)$, define $\Vcal = \{v \in H^1(\PDEdomain) \, \big \vert \, v|_{\Gamma_D} = 0 \}$ which is equipped with the norm $\|\cdot\|_{\Vcal}$. We seek $w \in \Vcal$ such that
\begin{align}\label{eq:WeakForm}
\int_\PDEdomain v \frac{\partial}{\partial t} w \,d\bfx = -a(w,v) + f(v) \,\,\,\, \forall v \in \Vcal\,
\end{align}
where
\[
    a(w,v)  = \int_\PDEdomain \nabla v \cdot (\bfb^T \nabla w) \, d\bfx - \int_\PDEdomain v [  \nabla w  \cdot \bfc  + wd ] \,d \bfx \quad
    \]
and
\[
    f(v)  = \int_\PDEdomain v R  \,d \bfx
 + \sum_{j=1}^{n_{\Gamma}} u_j(t) \int_{\Gamma^j_N} v \, d\Gamma_N,
\]
see \cite{book:Thomee2006,book:Evans2010,book:HesthavenRS2016} for details. In the following, we assume that the bilinear form $a$ in \eqref{eq:WeakForm} is coercive and continuous, i.e., $\exists \,\, \alpha > 0$ and $\gamma < \infty$ for which $a(v,v) \ge \alpha \|v\|^2_{\Vcal}$ and $a(w,v) \le \gamma \|w\|_{\Vcal} \|v\|_{\Vcal}$ for $v, w \in \Vcal$ and $f$ in \eqref{eq:WeakForm} is continuous. To discretize~\eqref{eq:WeakForm}, consider a finite-dimensional approximation space $\Vcal_{\nh}\subset \Vcal$ with basis $\{\varphi_i\}_{i=1}^N$  such that for $w \in \Vcal_{\nh}$, $w(\bfx,t) = \sum_{i=1}^N w_i(t) \varphi_i (\bfx)$. Setting $v = \varphi_i, i= 1,\dots,N$ in~\eqref{eq:WeakForm} results in
\begin{align} \label{eq:WeakFormMatrix}
\bfM \frac{d \bfw(t)}{dt} = \bfK \bfw(t) + \bfF \bfu(t)
\end{align}
where $\bfw(t) = [w_1(t),\dots,w_{\nh}(t)]^T \in \R^{\nh}$, $\bfM \in \R^{N \times N}$ such that $[\bfM]_{ij} = \int_\PDEdomain \varphi_j \varphi_i \,d\bfx$, $\bfK \in \R^{N \times N}$ such that $[\bfK]_{ij}= -a(\varphi_j,\varphi_i)$, $\bfu(t) = [1, u_1(t),\dots,u_{n_{\Gamma}}(t)]^T   \in \R^{p} $ with $p = n_{\Gamma} + 1$, while
$$\bfF =  \begin{bmatrix}
\int_{\PDEdomain} \varphi_1 R \,d \bfx & \int_{\Gamma^1_{N}}  \varphi_1 \,d \Gamma_N & \dots & \int_{\Gamma^{n_{\Gamma}}_N}  \varphi_1 \,d\Gamma_N \\
\vdots & \vdots & \ddots & \vdots \\
\int_{\PDEdomain} \varphi_{\nh} R \,d \bfx & \int_{\Gamma^1_{N} } \varphi_N \,d \Gamma_N & \dots & \int_{\Gamma^{n_{\Gamma}}_N} \varphi_N \,d\Gamma_N \end{bmatrix} \in \R^{N \times p}.$$
If the source term $R=0$, $p = n_{\Gamma}$ and the resulting $\bfu(t),\bfF$ are obtained by truncating the first component of $\bfu(t)$ and the first column of $\bfF$ defined above.

\subsection{Time discretization} \label{subsec:timeDiscr}

To temporally discretize the time-continuous system~\eqref{eq:WeakFormMatrix}, let $\{t_k\}_{k=0}^K \subset \Tcal$ be equally spaced points with $t_{k+1}-t_k = \delta t$
and denote by $\bfw_k,\bfu_k$ the discrete time approximations to $\bfw(t_k),\bfu(t_k)$.
 A one-step scheme can be expressed as \begin{align} \label{eq:OneStepDisc}
\frac{\bfw_{k+1} - \bfw_k}{\delta t} = \beta \bfM^{-1} (\bfK \bfw_{k+1} + \bfF \bfu_{k+1}) + (1-\beta) \bfM^{-1} (\bfK \bfw_k + \bfF \bfu_k), \hspace{2em} \beta \in [0,1]
\end{align}
in which we recover the forward Euler, backward Euler, and Crank-Nicolson method with $\beta = 0$, $\beta = 1$, and $\beta = \frac{1}{2}$, respectively. We rewrite \eqref{eq:OneStepDisc} as
\begin{align} \label{eq:LTIsystem}
\bfw_{k+1} = \bfA \bfw_k + \bfB \bfg_{k+1}
\end{align}
with
\begin{align*}
    \bfA & =  (\bfI_N - \beta \delta t \bfM^{-1} \bfK)^{-1} (\bfI_N + (1-\beta) \delta t \bfM^{-1} \bfK), \\
    \bfB & =  (\bfI_N - \beta \delta t \bfM^{-1} \bfK)^{-1} \delta t \bfM^{-1} \bfF, \\
    \bfg_{k+1} & =  \beta \bfu_{k+1} + (1-\beta) \bfu_k,
\end{align*}
and the $N \times N$ identity matrix $\bfI_N$. Note that $\bfg_{k+1}=\bfu_k$ for $\beta = 0$ while $\bfg_{k+1} = \bfu_{k+1}$ for $\beta = 1$. We refer to $\bfW = [\bfw_0, \dots, \bfw_K]$ as a trajectory. We further define $\Gcal$ as the set of input trajectories $\bfG = [\bfg_1,\dots,\bfg_K] \in \mathbb{R}^{p \times K}$ of arbitrary but finite length $K$ so that $\sum_{k=1}^K [\bfg_k]_i^2 < \infty$ for $i=1,\dots,p$, i.e. each component of the discrete-time input has finite norm on the time interval $\Tcal$. Since $u_j \in L^2(\Tcal)$,  we only consider input trajectories $\bfG \in \Gcal$.

\subsection{Traditional (intrusive) model reduction} \label{subsec:IntMR}

Model reduction seeks an approximate solution to~\eqref{eq:LTIsystem} which lies in a low-dimensional subspace $\Vcal_{\nr}$ spanned by the columns of $\bfV_{\nr} = [\bfv_1,\dots,\bfv_{\nr}]\in \R^{\nh \times \nr}$ with $\nr \ll \nh$. Various approaches exist for constructing the low-dimensional subspace, see, e.g.,~\cite{RozzaPateraSurvey,paper:BennerGW2015,Quarteroni2011,book:HesthavenRS2016,doi:10.1080/00207170410001713448}. In the following, we use the proper orthogonal decomposition (POD) to construct $\bfV_{\nr}$. Let $[\bfw_0,\dots,\bfw_K]$  be the snapshot matrix whose columns are  the states $\bfw_k$. The basis $\bfV_n$ for $\Vcal_{\nr}$  is derived from the left singular vectors of the snapshot matrix corresponding to the $\nr$ largest singular values. Via Galerkin projection, the low-dimensional (reduced) system can then be derived as \begin{align} \label{eq:ROMsystem}
\tbfw_{k+1} &= \tbfA \tbfw_k + \tbfB \bfg_{k+1}
\end{align}
where
\begin{equation}
\tbfA = \bfV_{\nr} ^T \bfA \bfV_{\nr} \in \R^{\nr \times \nr}\,,\qquad \tbfB = \bfV_{\nr}^T \bfB \in \R^{\nr \times p}\,.
\label{eq:IntProj}
\end{equation}
The low-dimensional solution $\tbfw_k$ approximates the solution $\bfw_k$ to~\eqref{eq:LTIsystem} through $\bfV_{\nr} \tbfw_k$. We refer to $\tbfW = [\tbfw_0, \dots, \tbfw_{K-1}]$ as a reduced trajectory.

\subsection{Non-intrusive model reduction and problem formulation} \label{subsec:ProbForm}

Deriving reduced model \eqref{eq:ROMsystem} by forming the matrix-matrix products \eqref{eq:IntProj} of the basis matrix $\bfV_n$ and the operators $\bfA$ and $\bfB$ of the high-dimensional system is intrusive in the sense that $\bfA$ and $\bfB$ are required either in assembled form or implicitly through a routine that provides the action of $\bfA$ and $\bfB$ to a vector. In the following, we are interested in the situation where $\bfA$ and $\bfB$ are unavailable. Rather, we can simulate the high-dimensional system \eqref{eq:LTIsystem} at initial conditions and control inputs to generate state trajectories. Building on non-intrusive model reduction, we learn the reduced operators $\tbfA$ and $\tbfB$ from state trajectories without having $\bfA$ and $\bfB$ available. A major component of intrusive model reduction, besides constructing reduced models, is deriving error estimators that rigorously upper bound the approximation error of the reduced models with respect to the high-dimensional solutions \cite{paper:PrudhommeRVMMPT2001,COCV_2002__8__1007_0,veroy_posteriori_2003,doi:10.1002/fld.867,paper:GreplP2005,paper:HaasdonkO2011,HaasdonkError}. However, such error estimators typically depend on quantities such as norms of $\bfA$ and residuals that are unavailable in non-intrusive model reduction. Thus, error estimators developed for intrusive model reduction typically cannot be directly applied when reduced models are learned with non-intrusive model reduction methods.

\section{Certifying reduced models learned from data} \label{sec:CertifiedROM}

Our goal is two-fold: (i) learning the reduced operators \eqref{eq:IntProj} from state trajectories of the high-dimensional system and (ii) learning quantities to establish \emph{a posteriori} error estimators to rigorously bound the error $\|\bfw_k - \bfV_n\tbfw_k\|_2$ in the $2$-norm $\|\cdot\|_2$ of the reduced solution $\tbfw_k$ with respect to the high-dimensional solution $\bfw_k$ at time step $k$ for different initial conditions and different inputs than what was used during (i). The reduced operators and the quantities for the error estimators are learned under the setting that the high-dimensional operators in~\eqref{eq:LTIsystem} are unavailable in assembled and implicit form. We build on a non-intrusive approach for model reduction based on operator inference \cite{paper:PeherstorferW2016} and re-projection \cite{paper:Peherstorfer2019} and on an error estimator for linear evolution equations \cite{paper:HaasdonkO2011}. We show that the required quantities for the error estimator can be recovered from residual trajectories corresponding to training control inputs in a non-intrusive way similar to learning the reduced operators with operator inference and re-projection. These quantities then allow bounding the state error for other inputs and initial conditions.

Section~\ref{subsec:OpInfReproj} reviews operator inference with re-projection introduced in \cite{paper:Peherstorfer2019} and provides novel results on conditions which permit recovery of the reduced system operators. Section~\ref{subsec:HObound} discusses an error estimator from intrusive model reduction as presented in  \cite{paper:HaasdonkO2011}. To carry over the error estimator \cite{paper:HaasdonkO2011} to the non-intrusive model reduction case, an optimization problem is formulated in Section~\ref{subsec:StateErrBnd} whose unique solution leads to the required quantities for error estimation under certain conditions.  Sections~\ref{subsec:StateErrBndComput} and~\ref{subsec:Anorm} address prediction of the state \emph{a posteriori} error for other control inputs. The former utilizes a deterministic bound for the state error. In contrast, the latter offers a probabilistic error estimator whose reliability,  the probability of failure of the error estimator, can be controlled by the number of samples.  A summary of the proposed approach comprised of an offline (training) and online (prediction) phase is then given in Section~\ref{subsec:OffOn}.

\subsection{Recovering reduced models from data with operator inference and re-projection} \label{subsec:OpInfReproj}

Let $\bfV_n$ be the basis matrix with $n$ columns. Building on \cite{paper:PeherstorferW2016}, the work \cite{paper:Peherstorfer2019} introduces a re-projection scheme to generate the reduced trajectory $\tilde{\bfW} = [\tilde{\bfw}_0, \dots, \tilde{\bfw}_{K-1}]$  that would be obtained with the reduced model \eqref{eq:ROMsystem} as if it were available by querying the high-dimensional  system \eqref{eq:LTIsystem} alone with input trajectory $\bfG = [\bfg_1,\dots,\bfg_{K}]$. We define a queryable system as follows. \begin{definition} \label{defn:Queryable}
A system \eqref{eq:LTIsystem} is queryable if the trajectory $[\bfw_0, \dots, \bfw_K]$ with $K \geq 1$ can be computed for any initial condition $\bfw_0 \in \Vcal_{\nr}$ and any input trajectory $\bfG = [\bfg_1, \dots, \bfg_K] \in \Gcal$.
\end{definition}
For example, system \eqref{eq:LTIsystem} can be black-box and queryable in the sense that the operators $\bfA$ and $\bfB$ are unavailable but $\bfw_0$ and $\bfG$ can be provided to a black box to produce $\bfW$. In contrast, if there is a high-dimensional system for which a trajectory $\bfW$ for an input trajectory $\bfG$ is given, without being able to choose $\bfG$ and initial condition, then such a system is not queryable.

For a queryable system, the re-projection scheme alternates between time-stepping the high-dimensional system \eqref{eq:LTIsystem} and projecting the state onto the space $\Vcal_{\nr}$ spanned by the columns of $\bfV_n$. Let $\bfw_0 \in \Vcal_n$  be the initial condition and define $\rbfw_0 = \bfV_n^T\bfw_0$. The re-projection scheme takes a single time step with the high-dimensional system \eqref{eq:LTIsystem} with initial condition $\bfV_n\rbfw_0$ and control input $\bfg_1$ to obtain $\bfw_1$. The state $\bfw_1$ is projected to obtain $\rbfw_1 = \bfV_n^T\bfw_1$, and the process is repeated by taking a single time step with the high-dimensional system \eqref{eq:LTIsystem} with initial condition $\bfV_n\rbfw_1$ and control input $\bfg_2$.  It is shown in \cite{paper:Peherstorfer2019} that the re-projected trajectory $\rbfW = [\rbfw_0, \dots, \rbfw_{K-1}]$ is the reduced trajectory $\tbfW =  [\tbfw_0, \dots, \tbfw_{K-1}]$ in our case of a linear system \eqref{eq:LTIsystem}. Furthermore, the least-squares problem
\begin{align}\label{eq:OpInf}
\min_{\hbfA, \hbfB} \sum_{k=0}^{K-1} \left \Vert  \hbfA \rbfw_k  + \hbfB \bfg_{k+1} - \rbfw_{k+1}\right \Vert_2^2
\end{align}
has as the unique solution the reduced operators $\tilde{\bfA}$ and $\tilde{\bfB}$ if the data matrix
\begin{equation}
\bfPsi = \begin{bmatrix}\rbfW^T & \bfG^T \end{bmatrix} \in \R^{K \times (n+p)}
\label{eq:DataMatrix}
\end{equation}
has full rank and $K \geq n + p$; see Corollary~3.2 in \cite{paper:Peherstorfer2019} for more details.

The following proposition generalizes the least-squares problem \eqref{eq:OpInf} to trajectories from multiple initial conditions and shows that in this case there always exist initial conditions and input trajectories such that the unique solution of the corresponding least-squares problem is given by the reduced operators $\tbfA$ and $\tbfB$.

\begin{proposition}
There exist $\nr + p$ input trajectories $\bfG^{(1)}, \dots, \bfG^{(\nr + p)} \in \Gcal$, each of finite length $K_{\ell} \in \mathbb{N}$ for $\ell = 1, \dots, \nr + p$, and $\nr + p$ initial conditions $\bfw_0^{(1)}, \dots, \bfw_0^{(\nr + p)} \in \Vcal_{\nr}$ such that the generalized data matrix
\[
\boldsymbol\Phi=\begin{bmatrix} \rbfW^{(1)} & \dots & \rbfW^{(\nr+p)} \\ \bfG^{(1)} &  \dots & \bfG^{(\nr + p)}  \end{bmatrix}^T \in \R^{(\sum_{\ell = 1}^{\nr + p} K_{\ell}) \times (n+p)}
\]
with re-projected trajectories $\rbfW^{(\ell)} = [\rbfw^{(\ell)}_0,\dots,\rbfw^{(\ell)}_{K_{\ell}-1}] \in \R^{n \times K_{\ell}}$
has full rank, thereby guaranteeing the recovery of the reduced operators $\tbfA, \tbfB$ via least-squares regression.
\label{prop:InputsExist}
\end{proposition}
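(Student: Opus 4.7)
}
My plan is to prove the proposition constructively: I will exhibit explicit initial conditions and input trajectories for which $\boldsymbol\Phi$ is (up to row-scaling) the identity matrix on $\mathbb{R}^{n+p}$, and then invoke re-projection together with the least-squares argument of Corollary~3.2 of~\cite{paper:Peherstorfer2019} to conclude recovery of $\tbfA$ and $\tbfB$.

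First, I would set $K_\ell = 1$ for every $\ell = 1, \dots, n+p$. With this choice, each re-projected trajectory $\rbfW^{(\ell)} = [\rbfw_0^{(\ell)}] \in \mathbb{R}^{n \times 1}$ consists only of the projected initial state $\rbfw_0^{(\ell)} = \bfV_n^T \bfw_0^{(\ell)}$, and each input trajectory $\bfG^{(\ell)} = [\bfg_1^{(\ell)}] \in \mathbb{R}^{p \times 1}$ is a single input vector. Such single-step trajectories belong to $\Gcal$ because the finiteness condition in the definition of $\Gcal$ is trivially satisfied. Each trajectory then contributes exactly one row $[\rbfw_0^{(\ell)\,T}, \bfg_1^{(\ell)\,T}] \in \mathbb{R}^{1 \times (n+p)}$ to $\boldsymbol\Phi$, so that $\boldsymbol\Phi \in \mathbb{R}^{(n+p) \times (n+p)}$ is square.

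Second, I would choose the following concrete pairs. For $\ell = 1, \dots, n$, take $\bfw_0^{(\ell)} = \bfV_n \bfe_\ell \in \Vcal_{\nr}$ (which yields $\rbfw_0^{(\ell)} = \bfe_\ell \in \mathbb{R}^n$ since $\bfV_n$ has orthonormal columns) and $\bfg_1^{(\ell)} = \bfzero \in \mathbb{R}^p$. For $\ell = n+1, \dots, n+p$, take $\bfw_0^{(\ell)} = \bfzero \in \Vcal_{\nr}$ and $\bfg_1^{(\ell)} = \bfe_{\ell - n} \in \mathbb{R}^p$. With these choices the rows of $\boldsymbol\Phi$ are precisely the standard basis vectors of $\mathbb{R}^{n+p}$ in order, so $\boldsymbol\Phi = \bfI_{n+p}$ has full rank.

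Third, for the least-squares recovery statement, I would write down the generalized least-squares problem associated with the stacked trajectories,
\begin{equation*}
\min_{\hbfA, \hbfB} \sum_{\ell=1}^{n+p} \sum_{k=0}^{K_\ell - 1} \bigl\Vert \hbfA \rbfw_k^{(\ell)} + \hbfB \bfg_{k+1}^{(\ell)} - \rbfw_{k+1}^{(\ell)} \bigr\Vert_2^2,
\end{equation*}
whose normal equations can be expressed in terms of $\boldsymbol\Phi^T \boldsymbol\Phi$. Full column rank of $\boldsymbol\Phi$ implies that $\boldsymbol\Phi^T \boldsymbol\Phi$ is invertible and the minimizer is unique. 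Since by the re-projection result of~\cite{paper:Peherstorfer2019} every re-projected trajectory satisfies the reduced model exactly, i.e.\ $\tbfA \rbfw_k^{(\ell)} + \tbfB \bfg_{k+1}^{(\ell)} = \rbfw_{k+1}^{(\ell)}$, the pair $(\tbfA, \tbfB)$ is a feasible minimizer with zero residual and hence, by uniqueness, the unique least-squares solution. I do not anticipate a real obstacle here: the construction is elementary, and the only care needed is to keep the row/column indexing of $\boldsymbol\Phi$ consistent with~\eqref{eq:DataMatrix} and to verify that single-step trajectories are admissible under Definition~\ref{defn:Queryable} and under the definition of $\Gcal$.
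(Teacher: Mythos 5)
Your proposal is correct and takes essentially the same route as the paper's own proof: the paper likewise splits the $\nr+p$ trajectories into $\nr$ linearly independent initial conditions in $\Vcal_{\nr}$ paired with zero inputs and $p$ linearly independent inputs paired with the zero initial condition, concludes full rank of $\boldsymbol\Phi$ from these $\nr+p$ linearly independent rows, and invokes the re-projection exactness result of \cite{paper:Peherstorfer2019} for recovery of $\tbfA,\tbfB$. Your only deviations are cosmetic: choosing canonical basis vectors and $K_\ell = 1$ so that $\boldsymbol\Phi = \bfI_{n+p}$ exactly, and spelling out the zero-residual/uniqueness argument that the paper delegates to the citation.
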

\begin{proof}
The generalized data matrix $\boldsymbol\Phi$ is induced by the  least squares problem \begin{align}\label{eq:OpInfLSMoreInput}
\min_{\hbfA, \hbfB} \sum_{\ell=1}^{\nr + p} \sum_{k=0}^{K_{\ell}-1} \left \Vert  \hbfA \rbfw^{(\ell)}_k  + \hbfB \bfg^{(\ell)}_{k+1} - \rbfw^{(\ell)}_{k+1}\right \Vert_2^2 \end{align}
which is an extension of the least squares problem~\eqref{eq:OpInf}
for the case when there are $\ell = 1, \dots, \nr + p$ initial conditions $\bfw_0^{(1)}, \dots, \bfw_0^{(\nr + p)}$ and input trajectories $\bfG^{(\ell)} = [\bfg^{(\ell)}_1,\dots,\bfg^{(\ell)}_{K_{\ell}}] \in \R^{p \times K_{\ell}}$. If $\boldsymbol\Phi$ is full rank, $\hbfA,\hbfB$ in~\eqref{eq:OpInfLSMoreInput} recover the reduced operators $\tbfA,\tbfB$ as discussed in \cite{paper:Peherstorfer2019}.

We now derive specific initial conditions and control inputs that lead to a full-rank $\boldsymbol\Phi$. First, we select $\nr$ linearly independent initial conditions $\bfw_0^{(1)}, \dots, \bfw_0^{(\nr)} \in \Vcal_{\nr}$, which exist because $\Vcal_{\nr}$ has $\nr$ dimensions. Correspondingly, $\rbfw_0^{(1)}, \dots, \rbfw_0^{(\nr)} \in \mathbb{R}^{\nr}$ are linearly independent as well. To see this, note that $\rbfw_0^{(i)} = \bfV_{\nr}^T\bfw_0^{(i)}$ holds for $i = 1, \dots, \nr$ and thus $\bfV_{\nr}[\rbfw_0^{(1)}, \dots, \rbfw_0^{(\nr)}] = [\bfw_0^{(1)}, \dots, \bfw_0^{(\nr)}]$ because $\bfw_0^{(1)}, \dots, \bfw_0^{(\nr)} \in \Vcal_{\nr}$. Because $\bfV_{\nr}$ has orthonormal columns, the rank of $[\rbfw_0^{(1)}, \dots, \rbfw_0^{(\nr)}] = \bfV_{\nr}^T[\bfw_0^{(1)}, \dots, \bfw_0^{(\nr)}]$ is equal to the rank of $\bfV_{\nr}(\bfV_{\nr}^T[\bfw_0^{(1)}, \dots, \bfw_0^{(\nr)}]) = [\bfw_0^{(1)}, \dots, \bfw_0^{(\nr)}]$, which is $\nr$. Set $\bfg_1^{(i)} = \boldsymbol 0_{p \times 1}$ for $i = 1, \dots, \nr$ where $\boldsymbol{0}_{m \times n}$ represents an $m \times n$ matrix of zeros. Second, set $\bfw_0^{(\nr + 1)} = \dots = \bfw_0^{(\nr + p)} = \boldsymbol 0_{\nh \times 1} \in \Vcal_{\nr}$ and select $p$ linearly independent control inputs $\bfg_1^{(\nr + 1)}, \dots, \bfg_1^{(\nr + p)} \in \Gcal$, which exist because $\mathbb{R}^p \subset \Gcal$   per definition; see Section~\ref{subsec:timeDiscr}. Taking these $\nr + p$ initial conditions and input signals and time-stepping with re-projection the high-dimensional system for a finite number of times steps leads to a generalized data matrix $\boldsymbol \Phi$ that contains at least the following rows
\[
\begin{bmatrix}
\rbfw_0^{(1)}\\
\bfg_1^{(1)}
\end{bmatrix}^T\,, \dots\,, \begin{bmatrix}
\rbfw_0^{(\nr + p)}\\
\bfg_1^{(\nr + p)}
\end{bmatrix}^T\,.
\]
The matrix $\boldsymbol \Phi$ therefore contains $\nr + p$ linearly independent rows and thus has full rank. Note that $K_{\ell} \geq 1$ for $\ell = 1, \dots, \nr + p$.
\end{proof}

\begin{remark}
Proposition~\ref{prop:InputsExist} considers trajectories from multiple initial conditions to show that initial conditions and input trajectories exist to recover the reduced model via operator inference and re-projection. To ease exposition, we build on the formulation with a single initial condition \eqref{eq:OpInf} in the following and in all our numerical results. However, the following results immediately generalize to the formulation with multiple initial conditions used in Proposition~\ref{prop:InputsExist}.
\end{remark}

\subsection{Error estimation for linear reduced models in intrusive model reduction} \label{subsec:HObound}

We now elaborate on an \emph{a posteriori} estimator for the state error in intrusive model reduction by following the presentation  by Haasdonk and Ohlberger \cite{paper:HaasdonkO2011}; note, however, that intrusive error estimation for reduced models of parabolic PDEs has been studied by Grepl and Patera in \cite{paper:GreplP2005} as well and the following non-intrusive approach may extend to their error estimators too. For $k \in \mathbb{N}$,  define the state error at time $k$ as $\bfw_k - \bfV_n \tbfw_k$  and the residual $\bfr_k$ as \begin{align} \label{eq:LTIResidual}
\bfr_{k+1} = \bfA \bfV_{\nr} \tbfw_k + \bfB \bfg_{k+1} - \bfV_{\nr} \tbfw_{k+1}.
\end{align}
The
state error is
\begin{align} \label{eq:ErrState}
    \bfw_k - \bfV_{\nr} \tbfw_k = \bfA^k (\bfw_0 - \bfV_{\nr} \tbfw_{0}) + \sum_{l=0}^{k-1} \bfA^{k-l-1} \bfr_{l+1}.
\end{align}
Define
\begin{align} \label{eq:aposterioribnd}
    \StErrBnd_k(c_0,\dots,c_k; \bfw_0,\bfG) = c_0 \|\bfw_0 - \bfV_{\nr} \tbfw_0\|_2 + \sum_{l=0}^{k-1} c_{l+1}\|\bfr_{l+1}\|_2
\end{align}
which relies on the initial condition $\bfw_0$, input trajectory $\bfG \in \Gcal$, and constants $c_0,\dots,c_k \in \mathbb{R}$. The norm of \eqref{eq:ErrState} is then bounded by
\begin{align} \label{eq:ErrStateBnd}
    \|\bfw_k - \bfV_n \tbfw_k\|_2 \leq \StErrBnd_k(\|\bfA^k\|_2,\dots,\|\bfA^0\|_2; \bfw_0,\bfG) =  \|\bfA^k\|_2 \|\bfw_0 - \bfV_n \tbfw_0\|_2 + \sum_{l=0}^{k-1} \|\bfA^{k-l-1}\|_2\|\bfr_{l+1}\|_2.
\end{align}
If $\max_{0 \le l \le k} \|\bfA^l\|_2 \leq C$
for a constant $C \in \mathbb{R}$, then the following holds
$$\|\bfw_k - \bfV_n \tbfw_k\|_2  \le \StErrBnd_k(\underbrace{C,\dots,C}_{k+1}; \bfw_0,\bfG).$$
The error $\bfw_0 - \bfV_0 \tbfw_0$ of the initial condition is the projection error $\bfw_0 - \bfV_n\bfV^T_n\bfw_0$ and can be computed if $\bfV_n$ and the initial condition $\bfw_0$ are known.

\subsection{Recovering the residual operators from residual trajectories} \label{subsec:StateErrBnd}

The residual norm $\|\bfr_k\|_2$ at time step $k$ is a critical component for the error estimator in \cite{paper:HaasdonkO2011}; directly computing $\|\bfr_k\|_2$ using formula \eqref{eq:LTIResidual} would require either the high-dimensional system operators $\bfA$ and $\bfB$ or querying the system \eqref{eq:LTIsystem} at each $\tbfw_k$. Following \cite{paper:HaasdonkO2011}, the squared residual norm is expanded as
\begin{equation}
\|\bfr_k\|^2_2 = \tbfw_k^T \bfM_1 \tbfw_k + \bfg_{k+1}^T \bfM_2 \bfg_{k+1} + 2\bfg_{k+1}^T\bfM_3 \tbfw_k
+ \tbfw_{k+1}^T \bfM_4 \tbfw_{k+1}
 - 2\tbfw_{k+1}^T\tbfA \tbfw_k -2\tbfw_{k+1}^T \tbfB \bfg_{k+1}
\label{eq:SqResNorm}
\end{equation}
with the matrices
\[
\bfM_1 = \bfV_n^T\bfA^T\bfA\bfV_n\,,\quad \bfM_2 = \bfB^T\bfB\,,\quad \bfM_3 = \bfB^T\bfA\bfV_n\,,
\]
and $\bfM_4 = \bfV_n^T\bfV_n$. Observe that after the reduced model has been obtained with operator inference and re-projection (Section~\ref{subsec:OpInfReproj}), the matrices $\tbfA, \tbfB$, and $\bfM_4$ can be readily computed without $\bfA$ and $\bfB$. Only matrices $\bfM_1, \bfM_2, \bfM_3$ are needed additionally to compute the squared residual norm with \eqref{eq:SqResNorm}.

Let $\rbfW = [\rbfw_0, \dots, \rbfw_{K-1}]$ be the re-projected trajectory using an input trajectory $\bfG$. Let further $\rbfR = [\rbfr_0, \dots, \rbfr_{K-1}]$ be the residual trajectory corresponding to the re-projected trajectory defined as
\[
\rbfr_k = \bfA\bfV_n\rbfw_k + \bfB\bfg_{k+1} - \bfV_n\rbfw_{k + 1}\,,
\]
following the residual expression in~\eqref{eq:LTIResidual}. The following proposition shows that $\bfM_1, \bfM_2, \bfM_3$ can be derived via a least-squares problem using $\rbfR,\rbfW,\bfG$.
\begin{proposition} \label{prop:ErrorLS}
Define the data matrix $\bfD \in \R^{K \times \frac{1}{2}(n+p)(n+p+1)}$ as
\begin{align} \label{eq:DataMatErrInf}
 \bfD =
 \begin{bmatrix}
 \vech(2\rbfw_0 \rbfw_0^T - \diag(\rbfw_0 \rbfw_0^T)) & \cdots & \vech(2\rbfw_{K-1} \rbfw_{K-1}^T - \diag(\rbfw_{K-1} \rbfw_{K-1}^T)) \\
 \vech(2\bfg_{1} \bfg_{1}^T - \diag(\bfg_{1} \bfg_{1}^T)) & \cdots & \vech(2\bfg_{K} \bfg_{K}^T - \diag(\bfg_{K} \bfg_{K}^T))\\
 2 \vect(\bfg_{1} \rbfw_0^T) & \cdots & 2 \vect(\bfg_{K} \rbfw_{K-1}^T)
\end{bmatrix}^T
\end{align}
where $\vect(\cdot)$ is the vectorization operator, $\vech(\cdot)$ is the half-vectorization operator of a symmetric matrix, and $\diag(\cdot)$ is a diagonal matrix preserving only the diagonal entries of its matrix argument. Let $\bff\in \R^k$ whose $(k+1)$-th entry is
$$[\bff]_{k+1} = \|\rbfr_k\|^2_2 - \rbfw_{k+1}^T \bfM_4 \rbfw_{k+1} + 2\rbfw_{k+1}^T\tbfA \rbfw_k +2\rbfw_{k+1}^T \tbfB \bfg_{k+1}$$ and consider the least squares problem
\begin{align} \label{eq:ErrorLeastSquares}
\min_{\substack{\hbfM_1 \in \mathbb{R}^{\nr \times \nr},\\ \hbfM_2 \in \mathbb{R}^{p \times p},\\ \hbfM_3 \in \mathbb{R}^{p \times \nr}}} \sum_{k=0}^{K-1} \biggl(\rbfw_k^T \hbfM_1 \rbfw_k &+  \bfg_{k+1}^T \hbfM_2 \bfg_{k+1} + 2 \bfg_{k+1}^T \hbfM_3 \rbfw_k  -[\bff]_{k+1}\biggr)^2.
\end{align}
If $K \ge (n+p)(n+p+1)/2$ and the data matrix $\bfD$ has full rank, the unique solution to~\eqref{eq:ErrorLeastSquares} is $\hbfM_1 = \bfM_1, \hbfM_2 = \bfM_2, \hbfM_3 = \bfM_3$ with objective value 0.
\end{proposition}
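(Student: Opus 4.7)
The plan is to reformulate the nonlinear-looking objective~\eqref{eq:ErrorLeastSquares} as a standard linear least-squares problem in the unknowns, show that the true matrices $\bfM_1, \bfM_2, \bfM_3$ achieve objective value $0$, and finally invoke the full-rank assumption on $\bfD$ to conclude uniqueness.

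First I would parameterize. Since $\bfM_1 = \bfV_n^T \bfA^T \bfA \bfV_n$ and $\bfM_2 = \bfB^T \bfB$ are symmetric, and because the quadratic forms $\rbfw_k^T \hbfM_1 \rbfw_k$ and $\bfg_{k+1}^T \hbfM_2 \bfg_{k+1}$ only depend on the symmetric parts of $\hbfM_1$ and $\hbfM_2$, I would restrict attention to symmetric $\hbfM_1, \hbfM_2$. Using the identity $\bfx^T \hbfM \bfx = \vech(2\bfx\bfx^T - \diag(\bfx\bfx^T))^T \vech(\hbfM)$ for any symmetric $\hbfM$, and $2\bfg^T \hbfM_3 \bfx = 2 \vect(\bfg \bfx^T)^T \vect(\hbfM_3)$, each summand in~\eqref{eq:ErrorLeastSquares} becomes
\[
\bigl(\bfD_{(k+1,:)}\, \bftheta - [\bff]_{k+1}\bigr)^2\,, \qquad \bftheta = \begin{bmatrix} \vech(\hbfM_1) \\ \vech(\hbfM_2) \\ \vect(\hbfM_3) \end{bmatrix} \in \R^{(n+p)(n+p+1)/2}\,,
\]
where $\bfD_{(k+1,:)}$ denotes the $(k+1)$-th row of the data matrix $\bfD$ in~\eqref{eq:DataMatErrInf}. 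A dimension check (the block sizes $n(n+1)/2$, $p(p+1)/2$, and $np$ sum to $(n+p)(n+p+1)/2$) confirms that $\bfD$ and $\bftheta$ are compatible, so the problem reduces to $\min_{\bftheta} \|\bfD \bftheta - \bff\|_2^2$.

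Next I would verify that $\bftheta^\star$ obtained by stacking $\vech(\bfM_1), \vech(\bfM_2), \vect(\bfM_3)$ attains objective value $0$. Substituting $\tbfw_k = \rbfw_k$ and $\bfg_{k+1}$ into the residual expansion~\eqref{eq:SqResNorm} and using $\tbfA = \bfV_n^T\bfA\bfV_n$, $\tbfB = \bfV_n^T\bfB$, $\bfM_4 = \bfV_n^T\bfV_n$, the right-hand side equals $\|\rbfr_k\|_2^2$ exactly, where $\rbfr_k = \bfA\bfV_n\rbfw_k + \bfB\bfg_{k+1} - \bfV_n\rbfw_{k+1}$. Rearranging gives
\[
\rbfw_k^T \bfM_1 \rbfw_k + \bfg_{k+1}^T \bfM_2 \bfg_{k+1} + 2\bfg_{k+1}^T\bfM_3 \rbfw_k = [\bff]_{k+1}\,,
\]
i.e., $\bfD \bftheta^\star = \bff$, so $\bftheta^\star$ is a global minimizer with residual zero.

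Finally, uniqueness is the standard normal-equations argument: if $\bfD$ has full column rank, then $\bfD^T\bfD$ is invertible and the linear least-squares problem has a unique minimizer, which by the previous step must be $\bftheta^\star$. Full column rank requires at least $K \geq (n+p)(n+p+1)/2$ rows, matching the stated hypothesis. I expect the only delicate part of the write-up to be bookkeeping: carefully matching the row structure of $\bfD$ as defined in~\eqref{eq:DataMatErrInf} with the three vectorization identities, and justifying the restriction to symmetric $\hbfM_1, \hbfM_2$ (equivalently, noting that the half-vectorization parameterization already enforces symmetry so uniqueness is stated in that parameterization). No nontrivial analytical obstacle arises beyond that.
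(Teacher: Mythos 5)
Your proposal is correct and follows essentially the same route as the paper's proof: rewrite \eqref{eq:ErrorLeastSquares} as the linear least-squares problem $\min_{\hbfo}\|\bfD\hbfo-\bff\|_2^2$ in the stacked $\vech$/$\vect$ parameterization, observe from the residual-norm expansion \eqref{eq:SqResNorm} that $\hbfM_1=\bfM_1$, $\hbfM_2=\bfM_2$, $\hbfM_3=\bfM_3$ attain objective value zero, and invoke the full-rank hypothesis on $\bfD$ for uniqueness. Your explicit statement of the vectorization identities and of the restriction to symmetric $\hbfM_1,\hbfM_2$ (which the half-vectorization parameterization enforces) only makes precise what the paper's equivalence claim leaves implicit; it does not change the argument.
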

\begin{proof}
The least squares problem~\eqref{eq:ErrorLeastSquares} is equivalent to
\begin{align} \label{eq:ErrorLeastSquaresMatrix}
\min_{\hbfo} \|\bfD \hbfo -\bff\|_2^2
\end{align}
where
$$\hbfo = \begin{bmatrix}
\vech(\hbfM_1) \\ \vech(\hbfM_2) \\ \vect(\hbfM_3)
\end{bmatrix}
\in \R^{\frac{1}{2}(n+p)(n+p+1)}.$$
As the data matrix $\bfD$ is full rank with $K \ge (n+p)(n+p+1)/2$, it follows that~\eqref{eq:ErrorLeastSquaresMatrix} has a unique solution. This implies that~\eqref{eq:ErrorLeastSquares} also has a unique solution due to the equivalence between~\eqref{eq:ErrorLeastSquares} and~\eqref{eq:ErrorLeastSquaresMatrix}. From the residual norm expression~\eqref{eq:SqResNorm}, notice that $\hbfM_1 = \bfM_1, \hbfM_2 = \bfM_2, \hbfM_3 = \bfM_3$ yields an objective value of 0 for~\eqref{eq:ErrorLeastSquares}. Therefore, it is the unique minimizer for the least squares problem  \eqref{eq:ErrorLeastSquares}.
\end{proof}

\subsection{Error estimator based on the learned residual norm operators} \label{subsec:StateErrBndComput}

Consider a queryable system \eqref{eq:LTIsystem}. The residual trajectory of the re-projected state trajectory can be computed during the re-projection step. Let $\bfV_n$ be a basis matrix, $\bfwtrain_0 \in \Vcal_{\nh}$ an initial condition, and $\bfGtrain = [\bfgtrain_1, \dots, \bfgtrain_K] \in \Gcal$ an input trajectory.
Consider further the corresponding re-projected trajectory $\rbfWtrain = [\rbfwtrain_0,\dots,\rbfwtrain_{K-1}]$ and the corresponding residual trajectory $\rbfRtrain =[\rbfrtrain_0,\dots,\rbfrtrain_{K-1}]$. Denote by $$\bfPsitrain = \begin{bmatrix}(\rbfWtrain)^T & (\bfGtrain)^T \end{bmatrix} \in \R^{K \times (n+p)}$$ the data matrix for operator inference and $\bfDtrain$ the data matrix~\eqref{eq:DataMatErrInf} with $\rbfw_k = \rbfwtrain_k$ and $\bfg_k = \bfgtrain_k$. If $\bfPsitrain$ and $\bfDtrain$ have full rank with $K \ge(n+p)(n+p+1)/2$, the reduced model \eqref{eq:ROMsystem} can be recovered together with $\bfM_1, \bfM_2, \bfM_3$ defined in \eqref{eq:SqResNorm} following Section~\ref{subsec:OpInfReproj} and Proposition~\ref{prop:ErrorLS}.

Set $J>0$ as the number of time steps for prediction and let $\bfWtest = [\bfwtest_1,\dots,\bfwtest_J]$  be the state trajectory resulting from system~\eqref{eq:LTIsystem} subject to the initial state $\bfwtest_0$ and the input trajectory $\bfGtest = [\bfgtest_1, \dots, \bfgtest_J] \in \Gcal$. For the initial state $\tbfwtest_0 = \bfV^T_{\nr} \bfwtest_0$, denote by  $\tbfWtest=[\tbfwtest_1,\dots,\tbfwtest_J]$ the associated reduced state trajectory produced by the recovered reduced model derived from operator inference and re-projection. The  norm of the residual of the trajectory $\tbfWtest$ with respect to the high-dimensional model can be computed via \eqref{eq:SqResNorm} by invoking $\bfM_1, \bfM_2, \bfM_3$ learned as in Proposition~\ref{prop:ErrorLS}. Under certain conditions, the state error of $\tbfwtest_k$ can be bounded as follows.
\begin{proposition} \label{prop:ErrBndUnitNorm}
If $\|\bfrtest_k\|_2, k \in \mathbb{N},$ is the residual norm of $\tbfwtest_k$ calculated through~\eqref{eq:SqResNorm}, under the assumption that $\|\bfA\|_2 \le 1$, the state error of the learned reduced model can be bounded via
\begin{align} \label{eq:ErrBndStateUnitNorm}
    \|\bfwtest_k - \bfV_{\nr} \tbfwtest_k\|_2 \leq \StErrBnd_k(\underbrace{1,\dots,1}_{k+1}; \bfwtest_0,\bfGtest)= \|\bfwtest_0 - \bfV_{\nr} \tbfwtest_0\|_2 + \sum_{l = 0}^{k - 1} \|\bfrtest_{l + 1}\|_2.
\end{align}
\label{eq:PropConstant1}
\end{proposition}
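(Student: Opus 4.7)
The plan is to treat this as a direct specialization of the deterministic error decomposition already recorded in equation~\eqref{eq:ErrState} and its bounded form~\eqref{eq:ErrStateBnd}. First, I would apply~\eqref{eq:ErrState} to the test trajectory, writing
\[
\bfwtest_k - \bfV_{\nr}\tbfwtest_k = \bfA^k(\bfwtest_0 - \bfV_{\nr}\tbfwtest_0) + \sum_{l=0}^{k-1} \bfA^{k-l-1}\,\bfrtest_{l+1},
\]
where $\bfrtest_{l+1}$ is defined analogously to~\eqref{eq:LTIResidual} for the test inputs and test reduced states. Taking $2$-norms and using the triangle inequality followed by submultiplicativity of the spectral norm yields exactly~\eqref{eq:ErrStateBnd} with $\bfw_0,\bfG,\bfr_{l+1}$ replaced by their test counterparts.

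Next I would invoke the hypothesis $\|\bfA\|_2\le 1$. Submultiplicativity gives $\|\bfA^l\|_2 \le \|\bfA\|_2^l \le 1$ for every $l\ge 0$, so in particular $\max_{0\le l\le k}\|\bfA^l\|_2 \le 1$. Substituting this uniform bound into each coefficient of~\eqref{eq:ErrStateBnd} collapses the prefactors to $1$ and produces
\[
\|\bfwtest_k - \bfV_{\nr}\tbfwtest_k\|_2 \;\le\; \|\bfwtest_0 - \bfV_{\nr}\tbfwtest_0\|_2 + \sum_{l=0}^{k-1}\|\bfrtest_{l+1}\|_2 \;=\; \StErrBnd_k(1,\dots,1;\bfwtest_0,\bfGtest),
\]
which is the claimed inequality. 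The residual norms $\|\bfrtest_{l+1}\|_2$ appearing on the right-hand side are computable in a non-intrusive way via the expansion~\eqref{eq:SqResNorm} together with the matrices $\bfM_1,\bfM_2,\bfM_3$ recovered exactly from Proposition~\ref{prop:ErrorLS}, and $\|\bfwtest_0 - \bfV_{\nr}\tbfwtest_0\|_2$ is the projection error that is directly available.

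There is essentially no main obstacle here, since the statement is a specialization of an already-established inequality under a norm bound on $\bfA$; the only substantive content is the observation that $\|\bfA\|_2\le 1$ propagates to all powers $\bfA^l$ through submultiplicativity, eliminating the need to estimate $\|\bfA^l\|_2$ for each $l$ separately. The one subtle point worth highlighting in the write-up is that this $\|\bfA\|_2\le 1$ assumption is the non-trivial hypothesis of the proposition and will need to be justified (or replaced by a probabilistic surrogate) in the sequel, which is the purpose of the following subsection on $\|\bfA\|_2$ estimation.
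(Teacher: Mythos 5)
Your proposal is correct and follows essentially the same route as the paper: the paper's proof likewise specializes the decomposition~\eqref{eq:ErrState} to the test trajectory, bounds it via the triangle inequality as in~\eqref{eq:ErrStateBnd}, and collapses the coefficients using $\|\bfA^l\|_2 \le \|\bfA\|_2^l \le 1$, also noting first that the residual norms are computable from the learned $\bfM_1,\bfM_2,\bfM_3$ of Proposition~\ref{prop:ErrorLS}. Nothing is missing.
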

\begin{proof}
Using the basis matrix $\bfV_{\nr}$, the input trajectory $\bfGtest$, the recovered reduced operators $\tbfA,\tbfB$ from Section~\ref{subsec:OpInfReproj}, and the recovered matrices $\bfM_1,\bfM_2,\bfM_3$ from Proposition~\ref{prop:ErrorLS}, the residual norm $\|\bfrtest_k\|_2$ can be computed for $k = 1,\dots,J$.

From~\eqref{eq:ErrState}, we deduce
\begin{align} \label{eq:ActualStateEB}
   \| \bfwtest_k - \bfV_{\nr} \tbfwtest_k\|_2 \le \|\bfA^k\|_2 \|\bfwtest_0 - \bfV_{\nr} \tbfwtest_{0}\|_2 + \sum_{l=0}^{k-1} \|\bfA^{k-l-1}\|_2 \|\bfrtest_{l+1}\|_2 \le \StErrBnd_k(\underbrace{1,\dots,1}_{k+1}; \bfwtest_0,\bfGtest) .
\end{align}
The second inequality in~\eqref{eq:ActualStateEB} holds as $\|\bfA^l\|_2 \le \|\bfA\|^l_2 \le 1$ for $0\le l \le k$.
\end{proof}

\begin{remark}
Proposition~\ref{eq:PropConstant1} shows that $\StErrBnd_k$ is a pre-asymptotic, computable upper bound on the generalization error of the learned reduced model with respect to control inputs.
\end{remark}

The condition stated in Proposition~\ref{prop:ErrBndUnitNorm} is met, for example, in the following situations. Let the bilinear form $a$ in \eqref{eq:WeakForm} be symmetric. If $\beta = 0$ in~\eqref{eq:LTIsystem} (forward Euler) and the basis functions $\varphi_i$ are, e.g., orthonormal such that $\bfM$ is a multiple of the identity matrix, then $\bfA$ is symmetric and there exists a sufficiently small time-step size $\delta t$ such that the spectral radius $\rho(\bfA) = \|\bfA\|_2 \le 1$. Alternatively, certain mass lumping techniques \cite{book:Thomee2006} may be applied
to attain an $\bfM$ with such structure. Finally, if $\beta=1$ in \eqref{eq:LTIsystem} (backward Euler), it can be shown that there exists $\delta t$ such that the maximum singular value of $(\bfI - \delta t \bfM^{-1} \bfK)^{-1}$ is at most 1, which relies on the symmetry of $\bfM$ and $\bfK$.

\subsection{Probabilistic \emph{a posteriori} error estimator for the state} \label{subsec:Anorm}

We discuss an approach to bound $\|\bfA^l\|_2$, $0 \le l \le J$, if the condition $\|\bfA\|_2 \le 1$ in Proposition~\ref{prop:ErrBndUnitNorm} is not met or if it is unknown if $\|\bfA\|_2 \le 1$ holds. We seek an upper bound for $\|\bfA^l\|_2$ with probabilistic guarantees in order to derive a probabilistic \emph{a posteriori} error estimator for the state in Section~\ref{subsec:ProbError}. The practical implementation of this error estimator is then discussed in Section~\ref{subsec:PracticalProbBnd}. In the following, denote by $N(\boldsymbol{\mu},\boldsymbol{\Sigma})$ the multivariate Gaussian distribution  with mean $\boldsymbol{\mu}$  and covariance matrix $\boldsymbol{\Sigma}$.

\subsubsection{Probabilistic upper bound for $\|\bfA^{l}\|_2$ and the state error} \label{subsec:ProbError}

\begin{lemma} \label{lem:ChiSqErrorBnd}
For $l \in \mathbb{N}$, let $\bfTheta^{(l)} = \bfA^l \bfZ_1$ where $\bfZ_1 \sim N(\boldsymbol{0}_{\nh\times 1}, \bfI_{\nh})$ so that $\bfTheta^{(l)}$ is  an $\nh$-dimensional Gaussian random vector with mean zero and covariance $\bfA^l (\bfA^l)^T $. Suppose that $\{\bfTheta^{(l)}_{i}\}_{i=1}^M$ are $M \in \mathbb{N}$ independent and identically distributed $\nh$-dimensional random vectors with the same law as  $\bfTheta^{(l)}$. Then, for $\gamma_l > 0$,
\begin{align} \label{eq:ChiSqProb}
P\left(\gamma_l \max_{i=1,\dots,M} \|\bfTheta^{(l)}_{i}\|^2_2 \ge \|\bfA^l\|_2^2 \right) \ge  1-\left[F_{\chi^2_1}\left(\frac{1}{\gamma_l} \right) \right]^M
\end{align}
where $F_{\chi^2_1}$ is the cumulative distribution function of the chi-squared distribution with 1 degree of freedom.
\end{lemma}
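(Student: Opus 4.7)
\emph{Proof plan.} The plan is to reduce the bound on $\|\bfA^l\|_2$ to a one-dimensional Gaussian tail computation via the singular value decomposition of $\bfA^l$, and then to amplify the per-sample success probability by independence.

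First I would write the singular value decomposition $\bfA^l = \bfU \bfSigma \bfV^T$, where $\sigma_1 \geq \sigma_2 \geq \dots \geq \sigma_{\nh} \geq 0$ are the diagonal entries of $\bfSigma$ and $\sigma_1 = \|\bfA^l\|_2$. Realizing $\bfTheta^{(l)} = \bfA^l \bfZ_1$ with $\bfZ_1 \sim N(\boldsymbol 0_{\nh \times 1}, \bfI_{\nh})$, I set $\bfY = \bfV^T \bfZ_1$. Since $\bfV$ is orthogonal, $\bfY \sim N(\boldsymbol 0_{\nh \times 1}, \bfI_{\nh})$, and since $\bfU$ is orthogonal as well,
\[
\|\bfTheta^{(l)}\|_2^2 = \|\bfU \bfSigma \bfY\|_2^2 = \|\bfSigma \bfY\|_2^2 = \sum_{j=1}^{\nh} \sigma_j^2 Y_j^2 \;\geq\; \sigma_1^2 Y_1^2 \;=\; \|\bfA^l\|_2^2 \, Y_1^2,
\]
where $Y_1 \sim N(0,1)$, so $Y_1^2 \sim \chi_1^2$.

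Next I would apply this pointwise bound to each i.i.d.\ copy $\bfTheta^{(l)}_i = \bfA^l \bfZ_{1,i}$ with independent standard Gaussian $\bfZ_{1,i}$, obtaining independent standard Gaussians $Y_{1,i}$ for $i=1,\dots,M$ from the corresponding $\bfV^T \bfZ_{1,i}$. For any fixed $\gamma_l > 0$, the event $\{\gamma_l \|\bfTheta^{(l)}_i\|_2^2 < \|\bfA^l\|_2^2\}$ is contained in $\{Y_{1,i}^2 < 1/\gamma_l\}$, so
\[
P\!\left(\gamma_l \|\bfTheta^{(l)}_i\|_2^2 < \|\bfA^l\|_2^2 \right) \;\leq\; P(Y_{1,i}^2 < 1/\gamma_l) \;=\; F_{\chi_1^2}(1/\gamma_l).
\]
Finally, using that $\gamma_l \max_i \|\bfTheta^{(l)}_i\|_2^2 < \|\bfA^l\|_2^2$ is equivalent to the simultaneous occurrence of $\gamma_l \|\bfTheta^{(l)}_i\|_2^2 < \|\bfA^l\|_2^2$ for all $i$, and invoking independence of the $Y_{1,i}$, I would obtain
\[
P\!\left(\gamma_l \max_{i=1,\dots,M} \|\bfTheta^{(l)}_i\|_2^2 < \|\bfA^l\|_2^2 \right) \;\leq\; \bigl[F_{\chi_1^2}(1/\gamma_l)\bigr]^M,
\]
and the desired inequality follows by complementation.

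The argument is essentially routine once the SVD reduction is in place; the only subtle point is that $\bfA^l$ need not be invertible, so one cannot recover the $Y_{1,i}$ as measurable functions of the $\bfTheta^{(l)}_i$ alone. I would address this by working on the explicit construction $\bfTheta^{(l)}_i = \bfA^l \bfZ_{1,i}$ with independent $\bfZ_{1,i}$, which preserves the joint law of the $\bfTheta^{(l)}_i$ and makes the $Y_{1,i}$ well-defined and independent. No stronger concentration tool is needed.
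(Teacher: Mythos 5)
Your proof is correct and takes essentially the same approach as the paper: the paper diagonalizes $(\bfA^l)^T\bfA^l = \bfQ \Lambda \bfQ^T$ and bounds $\|\bfTheta^{(l)}\|_2^2 = \bfZ_2^T \Lambda \bfZ_2 \ge \|\bfA^l\|_2^2 ([\bfZ_2]_{\nh})^2$ with $\bfZ_2 = \bfQ^T \bfZ_1$, which is exactly your SVD reduction in different notation, and then amplifies the one-sample bound over the $M$ independent copies just as you do. Your closing caveat about non-invertibility is harmless but unnecessary: each event $\{\gamma_l \|\bfTheta^{(l)}_i\|_2^2 < \|\bfA^l\|_2^2\}$ is a measurable function of $\bfTheta^{(l)}_i$ alone, so the product formula follows from independence of the $\bfTheta^{(l)}_i$ directly, while the per-sample tail bound uses only the marginal law via the representation $\bfTheta^{(l)} \stackrel{d}{=} \bfA^l \bfZ_1$.
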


\begin{proof}
It suffices to show that
\begin{align} \label{eq:ChiSqProb1Sample}
    P\left(\gamma_l \|\bfTheta^{(l)}\|^2_2 \ge \|\bfA^l\|_2^2 \right) \ge  1-F_{\chi^2_1}\left(\frac{1}{\gamma_l} \right)
\end{align}
because using the fact
\begin{align*}
    P\left(\gamma_l \max_{i=1,\dots,M} \|\bfTheta^{(l)}_{i}\|^2_2 \le \|\bfA^l\|_2^2 \right)= \left[ P(\gamma_l \|\bfTheta^{(l)}_{i}\|^2_2 \le \|\bfA^l\|_2^2)\right]^M,
\end{align*}
we conclude that
\begin{align*}
    P\left(\gamma_l \max_{i=1,\dots,M} \|\bfTheta^{(l)}_{i}\|^2_2 \ge \|\bfA^l\|_2^2 \right) = 1 - \left[ P(\gamma_l \|\bfTheta^{(l)}_{i}\|^2_2 \le \|\bfA^l\|_2^2)\right]^M \ge 1-\left [F_{\chi^2_1}\left(\frac{1}{\gamma_l}\right)\right ]^M
\end{align*}
as desired.
The proof of~\eqref{eq:ChiSqProb1Sample} uses ideas similar to that in \cite{paper:Dixon1983}. Recall that $$\|\bfA^l\|_2 = \sqrt{\lambda_{max}((\bfA^l)^T\bfA^l)} $$ where $\lambda_{max}(\cdot)$ represents the largest eigenvalue of the matrix argument. Since $(\bfA^l)^T\bfA^l$ is real and symmetric, $(\bfA^l)^T\bfA^l = \bfQ \Lambda \bfQ^T$ where $\bfQ \in \R^{\nh \times \nh}$, $\bfQ^T\bfQ = \bfI_{\nh}$, and $\Lambda$ is a diagonal matrix whose entries $[\Lambda]_{ii} = \lambda_i$ satisfy $0 \le \lambda_1 \le \dots \le \lambda_{\nh} = \|\bfA^l\|^2_2$.  By setting $\bfZ_2 =\bfQ^T\bfZ_1$, we have $\bfZ_2 \sim N(\boldsymbol{0}_{\nh \times 1},\bfI_{\nh})$ and that
\begin{align*} 
\|\bfTheta^{(l)}\|_2^2 = (\bfTheta^{(l)})^T \bfTheta^{(l)} = \bfZ_1^T (\bfA^l)^T \bfA^l \bfZ_1 = \bfZ_1^T \bfQ \Lambda \bfQ^T   \bfZ_1 = \bfZ_2^T \Lambda \bfZ_2 \ge \|\bfA^l\|_2^2([\bfZ_2]_{\nh})^2
\end{align*}
where $[\bfZ_2]_{\nh}$ is the $\nh$-th component of $\bfZ_2$. Since $[\bfZ_2]_{\nh} \sim N(0,1)$, $([\bfZ_2]_{\nh})^2 \sim \chi^2_1$, i.e. it is a chi-squared random variable with 1 degree of freedom. It follows that for a constant $\gamma_l > 0$
with
\[
P\left(\gamma_l ([\bfZ_2]_{\nh})^2 \ge 1 \right) = 1-F_{\chi^2_1}\left(\frac{1}{\gamma_l} \right)
\]
we obtain
\begin{align*}
P(\gamma_l \|\bfTheta^{(l)}\|^2_2 \ge \|\bfA^l\|_2^2) \ge P\left(\gamma_l ([\bfZ_2]_{\nh})^2 \ge 1 \right) = 1-F_{\chi^2_1}\left(\frac{1}{\gamma_l} \right).
\end{align*}
\end{proof}

\begin{remark}
Results similar to~\eqref{eq:ChiSqProb} can be obtained for other distributions on $\bfTheta^{(l)}$ building on, e.g., \cite{paper:Dixon1983,paper:BujanovicK2020}.
\end{remark}

Using \eqref{eq:ChiSqProb}, we derive a probabilistic \emph{a posteriori} error estimator as the next result demonstrates.
\begin{proposition} \label{prop:GeneralStateErrBnd}
For  $l=1,\dots,J$, let $\bfTheta^{(l)} = \bfA^l \bfZ$ where $\bfZ \sim N(\boldsymbol{0}_{\nh\times 1}, \bfI_{\nh})$ so that $\bfTheta^{(l)}$ is  an $\nh$-dimensional Gaussian random vector with mean zero and covariance $\bfA^l (\bfA^l)^T $. Let $\{\bfTheta^{(l)}_{i}\}_{i=1}^M$ be independent and identically distributed $\nh$-dimensional random vectors with the same law as  $\bfTheta^{(l)}$ and define $$\Xi_l = \sqrt{\gamma_l \max_{i=1,\dots,M} \|\bfTheta^{(l)}_{i}\|^2_2},$$ for $\gamma_l > 0, l \ge 1$ with $\Xi_0=1$. For an initial state $\bfw_0 \in \Vcal_{\nh}$ and an input trajectory $\bfG$, the following holds
\begin{align} \label{eq:ProbErrBound}
    P\biggl(\bigcap_{k=1}^J \biggl\{\| \bfw_k - \bfV_{\nr} \tbfw_k\|_2   \le \StErrBnd_k(\Xi_k,\dots,\Xi_0;\bfw_0,\bfG)\biggr\} \biggr)
     \ge \max \left(0,  1 - \sum_{l=1}^J \left[F_{\chi^2_1}\left(\frac{1}{\gamma_l} \right) \right]^M \right).
\end{align}
\end{proposition}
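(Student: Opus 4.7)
The plan is to combine the deterministic state-error bound from Section~\ref{subsec:HObound} with the probabilistic bound on $\|\bfA^l\|_2$ from Lemma~\ref{lem:ChiSqErrorBnd}, and then to pool the individual failure events across $l = 1, \dots, J$ via a union bound. The key observation driving this is that the error estimator $\StErrBnd_k(c_0,\dots,c_k;\bfw_0,\bfG)$ defined in \eqref{eq:aposterioribnd} is monotonically nondecreasing in each coefficient $c_i$, so replacing $\|\bfA^i\|_2$ by any upper bound preserves the inequality in \eqref{eq:ErrStateBnd}.

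First, I would recall from \eqref{eq:ErrStateBnd} that for every realization,
\[
\|\bfw_k - \bfV_{\nr}\tbfw_k\|_2 \le \StErrBnd_k(\|\bfA^k\|_2, \dots, \|\bfA^0\|_2; \bfw_0, \bfG)
\]
holds deterministically for $k=1,\dots,J$. Next, I would note that $\|\bfA^0\|_2 = 1 = \Xi_0$ trivially, and that for each $l = 1, \dots, J$ Lemma~\ref{lem:ChiSqErrorBnd} yields
\[
P\bigl(\Xi_l \ge \|\bfA^l\|_2\bigr) \ge 1 - \left[F_{\chi^2_1}(1/\gamma_l)\right]^M.
\]
Let $E_l = \{\Xi_l \ge \|\bfA^l\|_2\}$ for $l=1,\dots,J$. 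On the joint event $\bigcap_{l=1}^J E_l$, the monotonicity of $\StErrBnd_k$ in its coefficient arguments implies
\[
\StErrBnd_k(\|\bfA^k\|_2,\dots,\|\bfA^0\|_2;\bfw_0,\bfG) \le \StErrBnd_k(\Xi_k,\dots,\Xi_0;\bfw_0,\bfG)
\]
for every $k \in \{1,\dots,J\}$ simultaneously, so the event whose probability is being lower-bounded contains $\bigcap_{l=1}^J E_l$.

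Finally I would apply the union bound to the complements: $P(\bigcap_{l=1}^J E_l) \ge 1 - \sum_{l=1}^J P(E_l^c) \ge 1 - \sum_{l=1}^J [F_{\chi^2_1}(1/\gamma_l)]^M$. Since probabilities are nonnegative, the bound is sharpened by taking the maximum with $0$, producing the stated right-hand side. The only subtle step, and the one I expect to need the most care, is justifying the monotone substitution uniformly over $k=1,\dots,J$ on a single event, because the $\Xi_l$'s are random and must be shown to dominate the $\|\bfA^l\|_2$'s simultaneously; the union bound handles exactly this, and importantly it is the same samples $\{\bfTheta_i^{(l)}\}_i$ (i.e., the same constants $\Xi_l$) that enter the estimator for every time index $k$, so no additional union bound over $k$ is required.
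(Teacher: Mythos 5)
Your proposal is correct and follows essentially the same route as the paper's proof: the same events $E_l = \{\Xi_l \ge \|\bfA^l\|_2\}$, the same reduction of the error event to $\bigcap_{l=1}^J E_l$ via the deterministic bound \eqref{eq:ErrStateBnd} and Lemma~\ref{lem:ChiSqErrorBnd}, with your union bound on the complements being algebraically identical to the Fr\'echet inequality $P(\cap_{l=1}^J E_l) \ge \max(0, \sum_{l=1}^J P(E_l) - (J-1))$ invoked in the paper. Your explicit remarks on the monotonicity of $\StErrBnd_k$ in its coefficient arguments and on reusing the same realizations $\Xi_l$ across all $k$ make precise two points the paper leaves implicit, but they do not change the argument.
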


\begin{proof}
Define the events $E_l=\{\Xi_l^2 \ge \|\bfA^l\|_2^2\}$ for $l=1,\dots,J$ and the event
\begin{align*}
E=\bigcap_{k=1}^J\biggl\{\| \bfw_k - \bfV_{\nr} \tbfw_k\|_2 \le \StErrBnd_k(\Xi_k,\dots,\Xi_0;\bfw_0,\bfG) \biggr\}\,.
\end{align*}
Recall from \eqref{eq:ErrStateBnd} that
\[
\| \bfw_k - \bfV_{\nr} \tbfw_k\|_2 \le \StErrBnd_k(\|\bfA^k\|_2,\dots,\|\bfA^0\|_2;\bfw_0,\bfG)\,,\qquad k = 1, \dots, J\,,
\]
holds, which means that we obtain $$ \displaystyle P(E) \ge P(\cap_{l=1}^J E_l).$$
Using the Fr\'echet inequality, we obtain
$$P(\cap_{l=1}^J E_l) \ge \max \left(0, \sum_{l=1}^J P(E_l) - (J-1) \right).$$
Therefore, using Lemma~\ref{lem:ChiSqErrorBnd},
$$P(E) \ge \max \left(0,\sum_{l=1}^J \left(1-\left[F_{\chi^2_1}\left(\frac{1}{\gamma_l} \right) \right]^M \right)- (J-1) \right) = \max \left(0,  1 - \sum_{l=1}^J \left[F_{\chi^2_1}\left(\frac{1}{\gamma_l} \right) \right]^M \right).$$

\end{proof}

\subsubsection{Sampling random vectors from queryable systems} \label{subsec:PracticalProbBnd}

We now discuss a practical implementation of the probabilistic error bound in Proposition~\ref{prop:GeneralStateErrBnd}. We resume the setup outlined in Section~\ref{subsec:StateErrBndComput}. Recall that the reduced model~\eqref{eq:ROMsystem} and $\bfM_1,\bfM_2,\bfM_3$ are recovered using the input trajectory $\bfGtrain$. Also, $\bfWtest$ and $\tbfWtest$ are the state and reduced state trajectories owing to the input trajectory $\bfGtest$ while $\|\bfrtest_k\|_2$ is the residual norm of $\tbfwtest_k$ calculated through~\eqref{eq:SqResNorm}.

To construct an upper bound for $\|\bfwtest_k - \bfV_{\nr}\tbfwtest_k\|_2$ according to Proposition~\ref{prop:GeneralStateErrBnd}, realizations of the random vectors $\bfTheta^{(l)} \sim N(\boldsymbol{0}_{\nh \times 1},\bfA^l (\bfA^l)^T), l = 1,\dots,k$ need to be simulated. Therefore, for fixed $M$, if $\{\bfz_i\}_{i=1}^M$ are realizations of $\bfZ$, realizations $\{\bftheta^{(l)}_{i}\}_{i=1}^M$ of $\bfTheta^{(l)}$ and hence a single realization $$\xi_l=\sqrt{\gamma_l \max_{i=1,\dots,M} \|\bftheta^{(l)}_{i}\|^2_2} $$ of $\Xi_l$ for $l=1,\dots,J$ can be simulated by querying~\eqref{eq:LTIsystem} for $J$ time steps with control input $\bfg_k = \boldsymbol{0}_{p \times 1}$ for all $k$ and with the realizations $\{\bfz_i\}_{i=1}^M$ serving as $M$ initial states, i.e. $\bfw_0 = \bfz_i$ for $i=1,\dots,M$. This produces $M$ trajectories of $\bfw_k = \bfA^k \bfw_0$. Note that $\xi_0=1$.

For specified $\gamma_l > 0$ which controls the confidence level (failure probability) of the probabilistic error estimator in~\eqref{eq:ProbErrBound}, an error estimate for $\|\bfwtest_k-\bfV_{\nr}\tbfwtest_k\|_2$ for $k=1,\dots,J$ is provided by
\begin{align}\label{eq:RealizedStateBnd}
      \StErrBnd_k(\xi_k,\dots,\xi_0;\bfwtest_0,\bfGtest) =
    \xi_k \, \|\bfwtest_0 - \bfV_{\nr} \tbfwtest_{0}\|_2 + \sum_{l=0}^{k-1} \xi_{k-l-1} \, \|\bfrtest_{l+1}\|_2
\end{align}
which we refer to as learned error estimate.

\begin{remark} Bounds on an output, a quantity of interest which is obtained via a linear functional of the state $\bfw_k$, can also be formulated if the norm of the output operator is available. Let the output at time $k$ be expressed as
\begin{align*}
    y_k = \bfC \bfw_k
\end{align*}
for which it is assumed that $\|\bfC\|_2$ is known. The output for the low-dimensional system $\ROMOutput_k$ is therefore
\begin{align*}
    \ROMOutput_k = \bfC \bfV_{\nr} \tbfw_k.
\end{align*}
Following \cite{paper:HaasdonkO2011}, since $$\|y_k - \ROMOutput_k\|_2 = \|\bfC (\bfw_k - \bfV_n \tbfw_k)\|_2 \le \|\bfC\|_2 \| \bfw_k - \bfV_n \tbfw_k\|_2,$$ and
$\StErrBnd_k(\xi_k,\dots,\xi_0;\bfw_0,\bfG)$ is an error estimate for $\|\bfw_k - \bfV_{\nr} \tbfw_k\|_2$,
we deduce
that $\ROMOutput_k -\OutErrBnd_k$ and $\ROMOutput_k + \OutErrBnd_k$ are lower and upper bound estimates for $y_k$
where
\begin{align} \label{eq:outputBounds}
    \OutErrBnd_k = \|\bfC\|_2 \StErrBnd_k(\xi_k,\dots,\xi_0;\bfw_0,\bfG).
\end{align}
\label{rm:Output}
\end{remark}

\subsection{Computational procedure for offline and online phase} \label{subsec:OffOn}

The proposed offline-online computational procedure for non-intrusive model reduction of certified reduced models is summarized in Algorithm~\ref{alg:OffOn}. It builds on the reprojection scheme in Algorithm~\ref{alg:reprojection} introduced in~\cite{paper:Peherstorfer2019} which is modified to include computation of the residual trajectory. The offline phase serves as a training stage to determine the unknown quantities while the online phase utilizes these for certified predictions.

The inputs to Algorithm~\ref{alg:OffOn} include the number of time steps  $K$ (training), $J$ (prediction), initial condition $\bfw_0$, the snapshot matrix $\bfWbasis = [
\bfw_0, \bfwbasis_1,\dots, \bfwbasis_K]$ owing to the input trajectory $\bfGbasis=[\bfgbasis_1,\dots,\bfgbasis_K] \in \Gcal$, the basis dimension $\nr$, the input trajectories $\bfGtrain,\bfGtest$ for training and prediction, $M \ge 1$, $\{\gamma_l\}_{l=1}^J$, and the input trajectory $\bfGnorm = [\boldsymbol{0}_{p \times 1},\dots,\boldsymbol{0}_{p \times 1}] \in \R^{p \times J}$ for finding an upper bound for $\|\bfA^l\|_2$, $1 \le l \le J$, and the computational model~\eqref{eq:LTIsystem} that can be queried.

The offline stage constitutes operator inference with reprojection (Section~\ref{subsec:OpInfReproj}) and estimation of state error upper bounds (Sections~\ref{subsec:StateErrBnd},~\ref{subsec:Anorm}) with the input trajectory $\bfGtrain \in \Gcal$. It is composed of three parts: inferring the reduced system, inferring the residual-norm operator, and finding an upper bound for the norm of $\bfA$ in the error estimator. The offline phase proceeds by building the low-dimensional basis $\bfV_{\nr}$ from trajectories of the state contained in $\bfWbasis$. The re-projetion algorithm is then invoked to generate the re-projected states $\rbfwtrain_k$ and its residual $\rbfrtrain_k$ corresponding to the control input $\bfGtrain$. Using data on $\rbfwtrain_k$ and $\bfGtrain$, the least squares problem~\eqref{eq:OpInf} is formulated in order to recover the reduced system \eqref{eq:ROMsystem} in a non-intrusive manner. The second part of the offline stage utilizes the inferred reduced system and data on the residual $\rbfrtrain$ to set up the least squares problem~\eqref{eq:ErrorLeastSquares}.  Solving
\eqref{eq:ErrorLeastSquares} yields the operators $\bfM_1,\bfM_2,\bfM_3$, which enable the computation of the residual norm \eqref{eq:SqResNorm} at any time for a specified control input. Finally, upper bounds for the operator norms $\|\bfA^l\|_2$ in the \emph{a posteriori} error expression \eqref{eq:aposterioribnd} are sought by querying the system~\eqref{eq:LTIsystem} at initial conditions consisting of $M$ realizations of $\bfZ \sim N(\boldsymbol{0}_{\nh \times 1}, \bfI_{\nh})$. The $M$ trajectories corresponding to each initial condition are employed in the definition of $\xi_l$ which is a realization of the probabilistic bound $\Xi_l$, i.e. $\|\bfA^l\|_2 \le \Xi_l$. Notice that Algorithms~\ref{alg:reprojection} and~\ref{alg:OffOn} do not rely on knowledge of $\bfA,\bfB$ in~\eqref{eq:LTIsystem} and $\bfM_1,\bfM_2,\bfM_3$ in~\eqref{eq:SqResNorm}. Furthermore, it is unnecessary  to use the same input trajectory $\bfGtrain$ for solving the least squares problems~\eqref{eq:OpInf} and~\eqref{eq:ErrorLeastSquares}.

In the online stage, the deduced quantities in the offline stage are invoked to compute the low-dimensional solution~\eqref{eq:ROMsystem}, the norm of its residual \eqref{eq:SqResNorm}, and consequently an upper bound for the state error~\eqref{eq:ErrBndStateUnitNorm} or~\eqref{eq:RealizedStateBnd} for an input trajectory $\bfGtest \in \Gcal$.

Algorithm~\ref{alg:OffOn} serves as the reference for the numerical examples undertaken in Section~\ref{sec:NumEx}.

\begin{algorithm}[t]
\caption{Data sampling with re-projection}
\begin{algorithmic}[1]
  \STATE Set $\rbfw_0 = \bfV_{\nr}^T\bfw_0$ 
  \FOR{$k = 0, \dots, K-1$}
        \STATE Query \eqref{eq:LTIsystem} for a single time step to obtain $\bfw_{tmp} = \bfA \bfV_{\nr} \rbfw_k + \bfB \bfg_{k+1}$
        \STATE Set $\rbfw_{k+1} = \bfV_{\nr}^T \bfw_{tmp}$
        \STATE Compute the residual $\rbfr_k = \bfw_{tmp} - \bfV_n\rbfw_{k + 1}$
  \ENDFOR
  \STATE Return $[\rbfw_0, \rbfw_1, \dots, \rbfw_{K}]$ and $[\rbfr_0, \rbfr_1, \dots, \rbfr_{K-1}]$
\end{algorithmic}
\label{alg:reprojection}
\end{algorithm}

\begin{algorithm}[t]
\caption{Data-driven model reduction}
\begin{algorithmic}[1]
\STATEx {\bfseries Offline phase}
\STATE Construct a low-dimensional basis $\bfV_{\nr}$ from the snapshot matrix $\bfWbasis$
\STATE Generate $\{\rbfwtrain_k\}_{k=0}^{K}$ via re-projection and its residual $\{\rbfrtrain_k\}_{k=0}^{K-1}$ (Algorithm~\ref{alg:reprojection}) using $\bfGtrain$
\STATE Perform operator inference by solving~\eqref{eq:OpInf} to deduce $\tbfA,\tbfB$
\STATE Infer $\bfM_1,\bfM_2,\bfM_3$ from \eqref{eq:ErrorLeastSquares} for the computation of~\eqref{eq:SqResNorm}
\STATE Simulate $M$ realizations $\{\bfz_i\}_{i=1}^M$ of $\bfZ \sim N(\boldsymbol{0}_{\nh \times 1},\bfI_{\nh})$
\STATE Produce $M$ realizations $\{\bftheta^{(l)}_{i}\}_{i=1}^M$ of $\bfTheta^{(l)}$ for $l=1,\dots,J$ by querying~\eqref{eq:LTIsystem} for $J$ time steps with $\bfw_0 = \bfz_i$, $i=1,\dots,M$ and input $\bfGnorm$
\STATE Compute $\xi_l = \sqrt{\gamma_l \max_{i=1,\dots,M} \|\bftheta^{(l)}_{i}\|_2^2}$ for $l=1,\dots,J$
\STATEx
\STATEx {\bfseries Online phase}
\STATE Calculate the low-dimensional solution $\{\tbfwtest_k\}_{k=1}^J$ to~\eqref{eq:ROMsystem} using the inferred $\tbfA,\tbfB$ and input $\bfGtest$
\STATE Evaluate $\|\bfrtest_k\|_2^2$ for $k=1,\dots,J$ in~\eqref{eq:SqResNorm} utilizing the deduced $\bfM_1,\bfM_2,\bfM_3$
\STATE Estimate the \emph{a posteriori} error for the state via \eqref{eq:ErrBndStateUnitNorm} or~\eqref{eq:RealizedStateBnd}  for $k=1,\dots,J$
\end{algorithmic}
\label{alg:OffOn}
\end{algorithm}

\section{Numerical results} \label{sec:NumEx}
The numerical examples in this section illustrate the following points: 1) the quantities for error estimators are learned from data up to numerical errors, 2) the learned low-dimensional system and the residual norm for the \emph{a posteriori} error estimators are exact reconstructions of those resulting from intrusive model reduction, 3) the learned quantities can be used to predict the low-dimensional solution and provide error estimates for specified control inputs, and 4) error estimators for the output, i.e. quantity of interest, can be deduced if the output operator is linear in the state and its norm  is available.

\subsection{Error quantities}
We compute the following quantities to assess the predictive capabilities of reduced models learned from data for test input trajectories $\bfGtest$ and test initial conditions $\bfwtest_0$.

Error of the reduced solution:
    \begin{align} \label{eq:Err_IntNonIntVSFOM}
       \ErrPerf^{(1)} = \frac{\|\bfWtest - \bfV_{\nr}\bbfWtest\|_F}{\|\bbfWtest\|_F}
    \end{align}
where $\bbfWtest$ refers to the trajectory of the reduced system inferred via intrusive model reduction ($\hatbfWtest$) or operator inference ($\tbfWtest$) and $\|\cdot\|_F$ is the Frobenius norm.

Time-averaged residual norm:
      \begin{align} \label{eq:Err_IntVSNonIntUnitNorm}
       \ErrPerf^{(2)} = \frac{1}{J}\sum_{k=0}^{J-1} \|\bfrtest_{k+1}\|_2
    \end{align}
where the residual norm $\|\bfrtest_{k+1}\|_2$ is computed through the two approaches for model reduction we compare: intrusive ($\|\hatbfrtest_{k+1}\|_2$) vs operator inference ($\|\tbfrtest_{k+1}\|_2$).

Relative average state error over time and its corresponding \emph{a posteriori} error estimates tabulated in Table~\ref{table:RelAveErr}.
\begin{table}

\begin{tabular*}{\textwidth}{
  @{\extracolsep{\fill}}
  l
  >{$\displaystyle}c<{\vphantom{\sum_{1}{N}}$} >{\refstepcounter{equation}(\theequation)}r
  @{}
}
\toprule
Errors and error estimators & \multicolumn{1}{c}{Definition} & \multicolumn{1}{c}{} \\
\midrule
\makecell{error of reduced solution \\via operator inference} &
  \ErrPerf^{(3)} = \frac{\sum_{k=0}^{J-1} \|\bfwtest_k - \bfV_{\nr}\tbfwtest_k \|_2}{J \sum_{k=0}^{J-1} \|\bfwtest_k\|_2} & \label{eq:Err_AveRelErrActualErr}
\\[1cm]
\makecell{intrusive model reduction \\ upper bound for the state error} &
  \ErrPerfBnd^{(1)} = \frac{\sum_{k=0}^{J-1} \StErrBnd_k(\|\bfA^k\|_2,\dots,\|\bfA^0\|_2;\bfwtest_0,\bfGtest)}{J \sum_{k=0}^{J-1} \|\bfwtest_k\|_2} & \label{eq:Err_AveRelErrActualBnd}
\\[1cm]
\makecell{realization of probabilistic \\ upper bound for the state error} &
  \ErrPerfBnd^{(2)} = \frac{\sum_{k=0}^{J-1} \StErrBnd_k(\xi_k,\dots,\xi_0;\bfw_0,\bfGtest)}{J \sum_{k=0}^{J-1} \|\bfwtest_k\|_2} & \label{eq:Err_AveRelErrProbBnd}
\\[1cm]
\makecell{learned deterministic \\ upper bound for the state error} &
  \ErrPerfBnd^{(3)} = \frac{\sum_{k=0}^{J-1} \StErrBnd_k(1,\dots,1;\bfwtest_0,\bfGtest)}{J \sum_{k=0}^{J-1} \|\bfwtest_k\|_2} & \label{eq:Err_AveRelErrDetBnd}
\\
\bottomrule
\end{tabular*}
\caption{Relative average state error over time and its corresponding error estimates obtained from intrusive model reduction and operator inference.}
\label{table:RelAveErr}
\end{table}

In \eqref{eq:Err_AveRelErrProbBnd}, $\xi_k$, $k=0,\dots,J$, are realizations of the random variables $\Xi_k$ defined in Proposition~\ref{prop:GeneralStateErrBnd}. In our experiments, we set $\gamma_l = \gamma$ for $l=1,\dots,J$ so that the probability lower bound in~\eqref{eq:ProbErrBound} becomes $$\ProbLB(\gamma,M,J) = \max \left(0,  1 - J \left[F_{\chi^2_1}\left(\frac{1}{\gamma} \right) \right]^M \right).$$

Relative state error at a particular time point $k$ and its corresponding \emph{a posteriori} error estimates tabulated in Table~\ref{table:ErrAtTime}.

\begin{table}

\begin{tabular*}{\textwidth}{
  @{\extracolsep{\fill}}
  l
  >{$\displaystyle}c<{\vphantom{\sum_{1}{N}}$} >{\refstepcounter{equation}(\theequation)}r
  @{}
}
\toprule
Errors and error estimators & \multicolumn{1}{c}{Definition} & \multicolumn{1}{c}{} \\
\midrule
\makecell{error of reduced solution \\via operator inference} &
  \ErrPerf^{(4)}=\frac{ \|\bfwtest_k - \bfV_{\nr}\tbfwtest_k \|_2}{\|\bfwtest_k\|_2} & \label{eq:Err_TimeActualErr}
\\[1cm]
\makecell{intrusive model reduction \\ upper bound for the state error} &
  \ErrPerfBnd^{(4)} = \frac{\StErrBnd_k(\|\bfA^k\|_2,\dots,\|\bfA^0\|_2;\bfwtest_0,\bfGtest)}{\|\bfwtest_k\|_2} & \label{eq:Err_TimeActualBnd}
\\[1cm]
\makecell{realization of probabilistic \\ upper bound for the state error} &
 \ErrPerfBnd^{(5)} = \frac{\StErrBnd_k(\xi_k,\dots,\xi_0;\bfw_0,\bfGtest)}{\|\bfwtest_k\|_2} & \label{eq:Err_TimeProbBnd}
\\[1cm]
\makecell{learned deterministic \\ upper bound for the state error} &
  \ErrPerfBnd^{(6)} = \frac{\StErrBnd_k(1,\dots,1;\bfwtest_0,\bfGtest)}{\|\bfwtest_k\|_2} & \label{eq:Err_TimeDetBnd}
\\
\bottomrule
\end{tabular*}
\caption{Relative state error at a particular time point $k$ and its corresponding error estimates obtained from intrusive model reduction and operator inference.}
\label{table:ErrAtTime}
\end{table}

\subsection{Heat transfer} \label{subsec:HeatTransf}

The setup for non-intrusive model reduction applied to this example is first described which is followed by the numerical results.

\subsubsection{Setup}

For $\PDEdomain = (0,1), \Tcal = (0,T), T=5,$  consider the heat equation on $(x,t) \in \PDEdomain \times \Tcal$ given by
\begin{align*}
    \frac{\partial}{\partial t} w(x,t) & = \mu \, \frac{\partial^2}{\partial x^2} w(x,t), \\
    w(0,t) & = 0, \\
    \frac{\partial}{\partial x}w(1,t) & = u(t) \\
    w(x,0) & = 0.
\end{align*}
To discretize the PDE, $\Omega$ is subdivided into $N = 133$ intervals with  width $\Delta x = 1/\nh. $ Let $\{\varphi_i\}_{i=1}^{\nh}$ be linear hat basis functions with $\varphi_i (j \Delta x) = \delta_{ij}$ where $\delta_{ij}$ is the Kronecker delta function. We obtain the continuous-time system $$\bfM \frac{d\bfw(t)}{dt} = \bfK \bfw(t) + \mu \begin{bmatrix}
0 \\ \vdots \\ 0 \\ u(t)
\end{bmatrix}$$
where $ [\bfK]_{ij} = -\mu \int_0^1 \frac{\partial \varphi_i}{\partial x} \frac{\partial \varphi_j}{\partial x} \,dx$ for $i,j=1,\dots,\nh$. In our simulation, we set $\mu = 0.1$ for the diffusivity parameter and temporally discretized the continuous system using backward Euler with $\delta t = 0.01$ being the time step size.

The basis $\bfV_{\nr}$ was constructed from the snapshot matrix of $K = 500$ time steps driven by the control input $\ubasis(t) = e^t \sin(20\pi t/T)$. The objective functions~\eqref{eq:OpInf} and~\eqref{eq:ErrorLeastSquaresMatrix} were optimized using the input trajectory $\bfGtrain = [0,z_1,\dots,z_K]$ where $z_i$ is a realization of $Z_i$, $i=1,\dots,K$,  which are independent and identically distributed $N(0,1)$ random variables.

\subsubsection{Results}

We now assess the performance of the learned reduced model and quantities required for \emph{a posteriori} error estimation. In the online stage, the control input $\utest(t) = e^t \sin(12\pi t/T)$ was discretized using 500 time steps. The quantities listed in Section~\ref{sec:NumEx} are calculated up to $\nr = 8$ basis vectors.

Figure~\ref{fig:sheat_residual_FOMerror} demonstrates that the reduced system and quantities required for error estimation can be recovered up to numerical errors. In particular, Figure~\ref{fig:sheat_FOMerr} plots~\eqref{eq:Err_IntNonIntVSFOM} for the reduced solution resulting from intrusive model reduction compared to that from operator inference. It demonstrates the theory established in earlier work \cite{paper:Peherstorfer2019} on the recovery of the reduced operators in the system \eqref{eq:ROMsystem}. Due to this, the error of the reduced trajectory from either approach is almost identical. The quantity \eqref{eq:Err_IntVSNonIntUnitNorm} involving the residual norm for both approaches of model reduction is presented in Figure~\ref{fig:sheat_residual}. The plot shows that both methods are in close agreement. If the conditions in Proposition~\ref{prop:ErrorLS} are met, the matrices $\bfM_1,\bfM_2,\bfM_3$ in \eqref{eq:SqResNorm} and hence the residual norm itself can also be recovered.

\begin{figure}
  \begin{subfigure}[b]{0.45\textwidth}
    \begin{center}
{{\LARGE\resizebox{1\columnwidth}{!}{\input{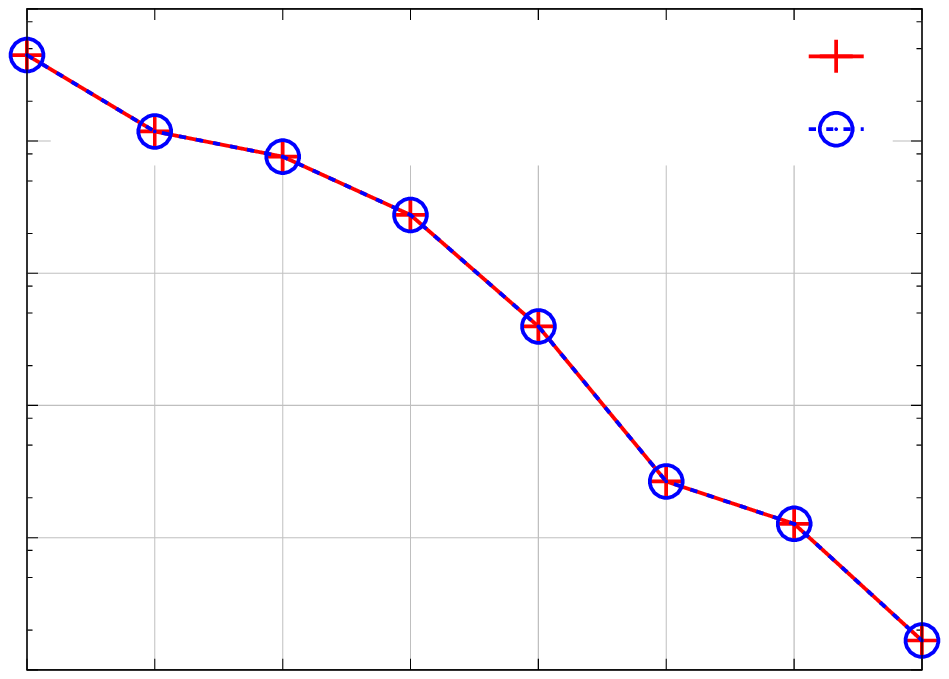}}}}
\end{center}
    \caption{state error}
    \label{fig:sheat_FOMerr}
  \end{subfigure}
  \begin{subfigure}[b]{0.45\textwidth}
    \begin{center}
{{\LARGE\resizebox{1\columnwidth}{!}{\input{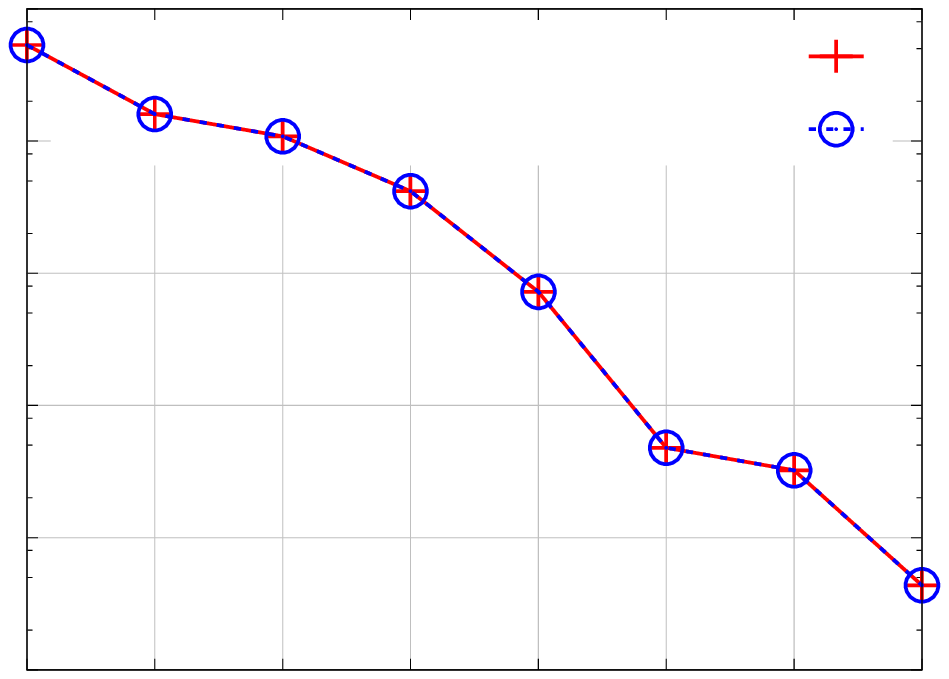}}}}
\end{center}
    \caption{residual}
    \label{fig:sheat_residual}
  \end{subfigure}
  \caption{Heat equation: The results in plots (a)-(b) indicate that the reduced system and the quantities required for error estimation under operator inference, i.e. residual norm operators, are equal to their intrusive counterparts up to numerical errors.} \label{fig:sheat_residual_FOMerror}
\end{figure}

In this simulation, our knowledge of $\bfM,\bfK$ informed the choice of $\delta t$ so that $\|\bfA\|_2 \le 1$ and thus, the deterministic error estimator \eqref{eq:ErrBndStateUnitNorm} is applicable. If this is not the case, the probabilistic error estimator introduced in Section~\ref{subsec:Anorm} can be utilized instead. Figure~\ref{fig:sheat_errAtTimes_relAveErr} shows the deterministic and probabilistic \emph{a posteriori} error estimates. For the probabilistic error estimate, we chose $\gamma=1, M = 25, J = 500$ so that  $\ProbLB(\gamma,M,J) \approx 0.9641$. Only one realization of each of the random variables $\Xi_l$, $l=1,\dots,J$ was generated for this example. Figures~\ref{fig:sheat_errAtTimes0001} and~\ref{fig:sheat_errAtTimes01} display the learned reduced model error
\eqref{eq:Err_TimeActualErr} and the intrusive~\eqref{eq:Err_TimeActualBnd}, probabilistic~\eqref{eq:Err_TimeProbBnd}, and deterministic~\eqref{eq:Err_TimeDetBnd} error estimates at $t = 1$ and $t = 5$, respectively. These plots depict the intrusive model reduction error estimate~\eqref{eq:ErrStateBnd} for the state error. We notice that the intrusive and deterministic (non-intrusive) error estimates are almost identical. In addition, the plots convey that the learned error estimate \eqref{eq:RealizedStateBnd} under operator inference is roughly of the same order of magnitude as the error estimate provided by the intrusive approach. The calculated quantities for the time-averaged learned reduced model error~\eqref{eq:Err_AveRelErrActualErr} and its corresponding intrusive \eqref{eq:Err_AveRelErrActualBnd}, probabilistic ~\eqref{eq:Err_AveRelErrProbBnd}, and deterministic~\eqref{eq:Err_AveRelErrDetBnd} error estimates are likewise shown in Figure~\ref{fig:sheat_relAveErr}. The plot reveals that the behavior of  the time-averaged relative state error is similar to that of the relative state error at various time instances.

\begin{figure}[ht!]
  \begin{subfigure}[b]{0.45\textwidth}
    \begin{center}
{{\LARGE\resizebox{0.9\columnwidth}{!}{\input{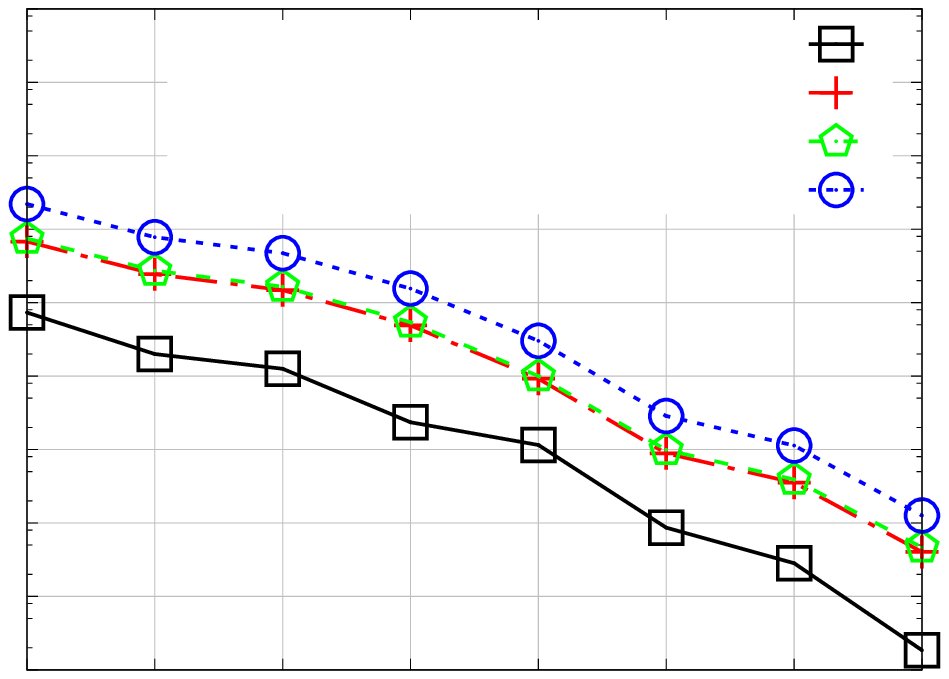}}}}
\end{center}
    \caption{error and error estimates at $t=1$}
    \label{fig:sheat_errAtTimes0001}
  \end{subfigure}
  \begin{subfigure}[b]{0.45\textwidth}
    \begin{center}
{{\LARGE\resizebox{0.9\columnwidth}{!}{\input{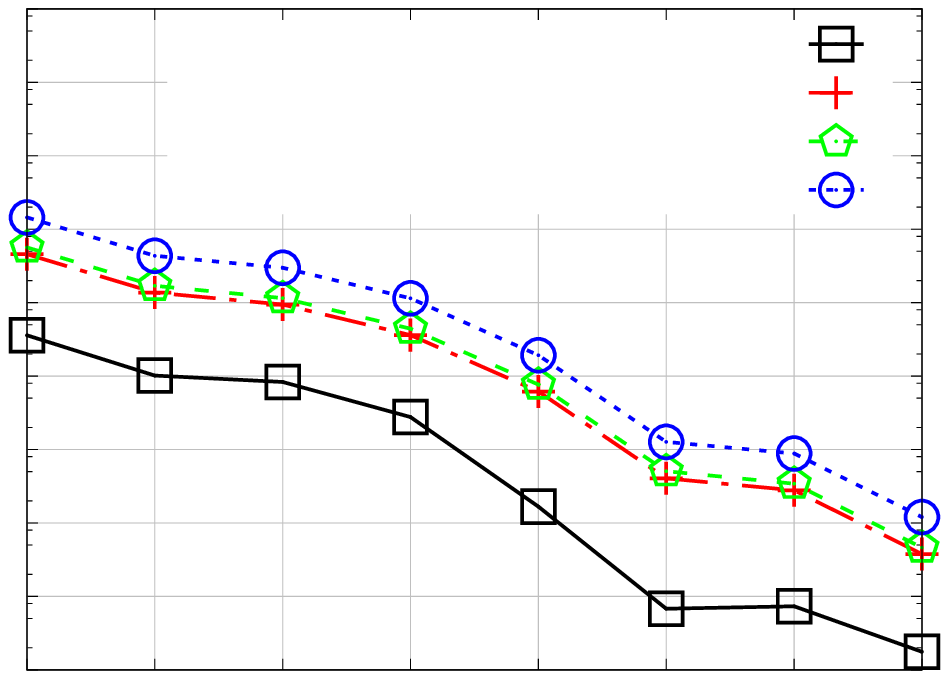}}}}
\end{center}
    \caption{error and error estimates at $t=5$}
    \label{fig:sheat_errAtTimes01}
  \end{subfigure}

\begin{center}
  \begin{subfigure}[b]{0.45\textwidth}
{{\LARGE\resizebox{0.9\columnwidth}{!}{\input{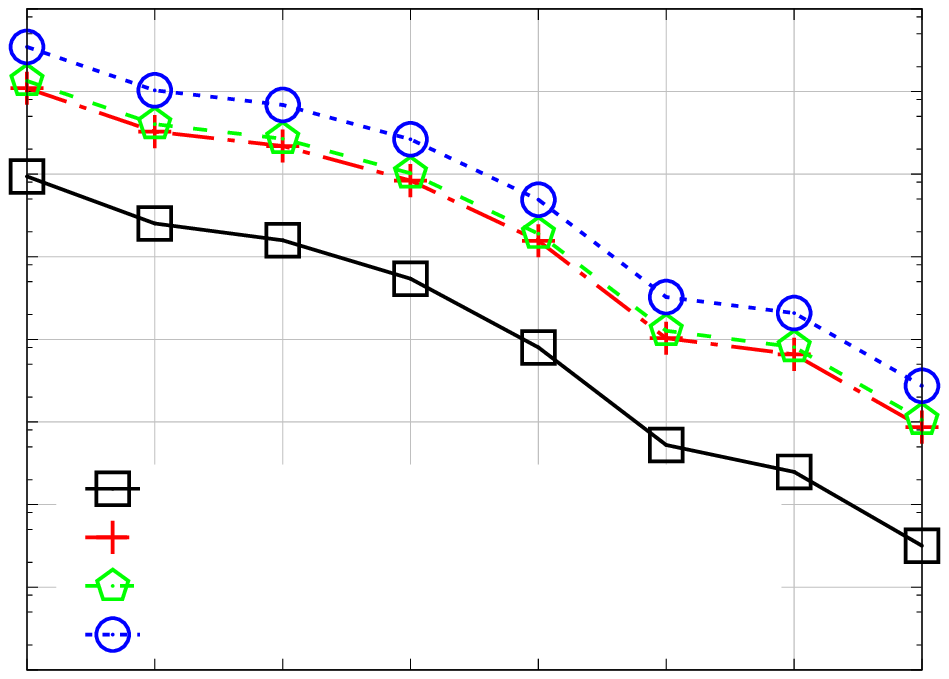}}}}
    \caption{average error and error estimates over $t \in [0,5]$}
    \label{fig:sheat_relAveErr}
  \end{subfigure}
\end{center}
\caption{Heat equation: The plots illustrate that the deterministic (non-intrusive) error estimator \eqref{eq:ErrBndStateUnitNorm} and the learned (probabilistic) error estimator derived in Proposition~\ref{prop:GeneralStateErrBnd} bound the error of the reduced solution in this example. The intrusive and deterministic error estimates are close. In addition, the learned error estimator indicates an error of the same order of magnitude as the intrusive error estimator. The parameters used for the learned error estimator were chosen as $\gamma=1,M=25,J=500$ so that the the learned estimator gives an upper with probability $\ProbLB \approx 0.9641$.} \label{fig:sheat_errAtTimes_relAveErr}
\end{figure}

In practice, the learned error estimator may depend on the realizations of the random variables $\bfTheta^{(l)}$ simulated. In all simulations described above, we performed calculations using only a single realization of $\Xi_l$. We therefore generate multiple realizations of $\Xi_l$ and study the variability in the resulting learned error estimate associated with various sets of realizations of  $\bfTheta^{(l)}$. Figure~\ref{fig:sheat_MC} compiles the mean (solid) of 100 realizations of the learned error estimator \eqref{eq:Err_TimeProbBnd} for $t=1$ and $t=5$ and \eqref{eq:Err_AveRelErrProbBnd} in Figures~\ref{fig:sheat_MCerrAtTimes0001},~\ref{fig:sheat_MCerrAtTimes0001}, and~\ref{fig:sheat_MCrelAveErr} respectively. In each panel, the vertical bars symbolize the minimum and maximum among the simulated realizations while the error estimate from the intrusive approach is also shown. We observe from the minimum and maximum values that there is low variability in the learned error estimates generated.

\begin{figure}[ht!]
  \begin{subfigure}[b]{0.45\textwidth}
    \begin{center}
{{\huge\resizebox{0.9\columnwidth}{!}{\input{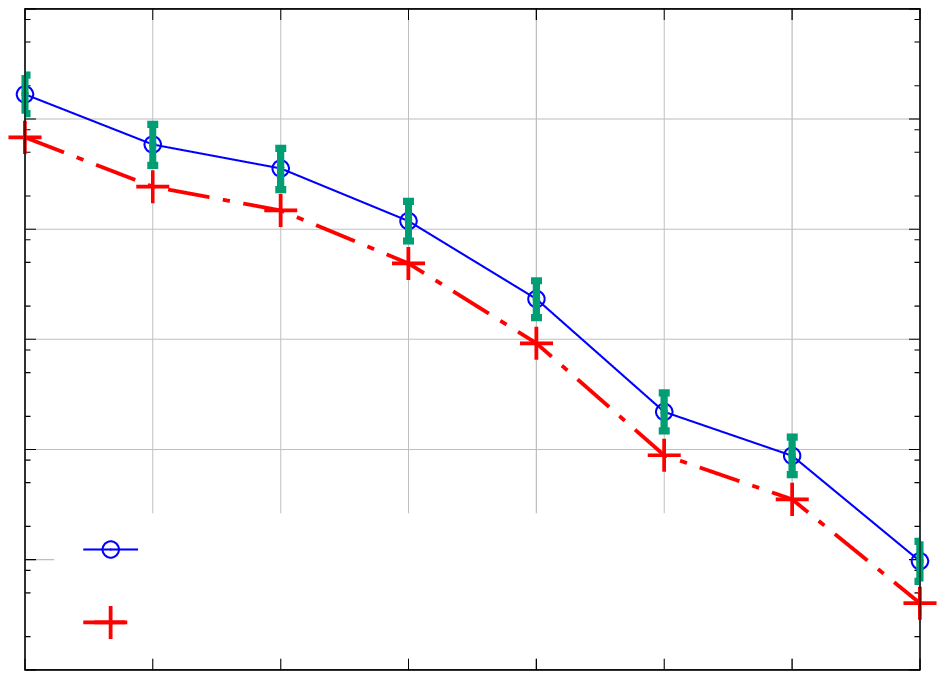}}}}
\end{center}
    \caption{error estimates at $t=1$}
    \label{fig:sheat_MCerrAtTimes0001}
  \end{subfigure}
  \begin{subfigure}[b]{0.45\textwidth}
    \begin{center}
{{\huge\resizebox{0.9\columnwidth}{!}{\input{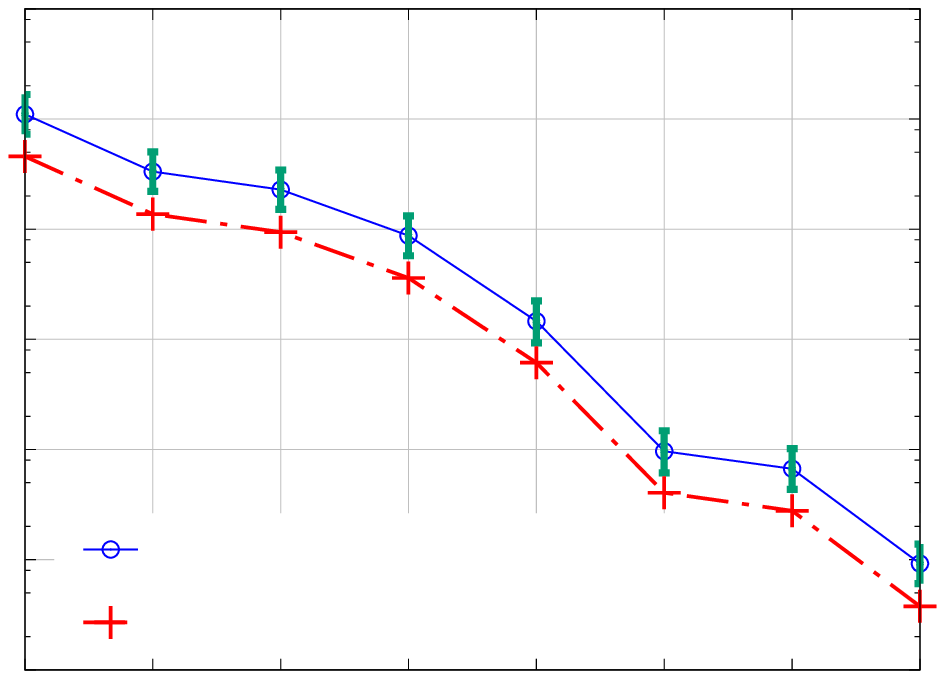}}}}
\end{center}
    \caption{error estimates at $t=5$}
    \label{fig:sheat_MCerrAtTimes01}
  \end{subfigure}

\begin{center}
  \begin{subfigure}[b]{0.45\textwidth}
{{\huge\resizebox{0.9\columnwidth}{!}{\input{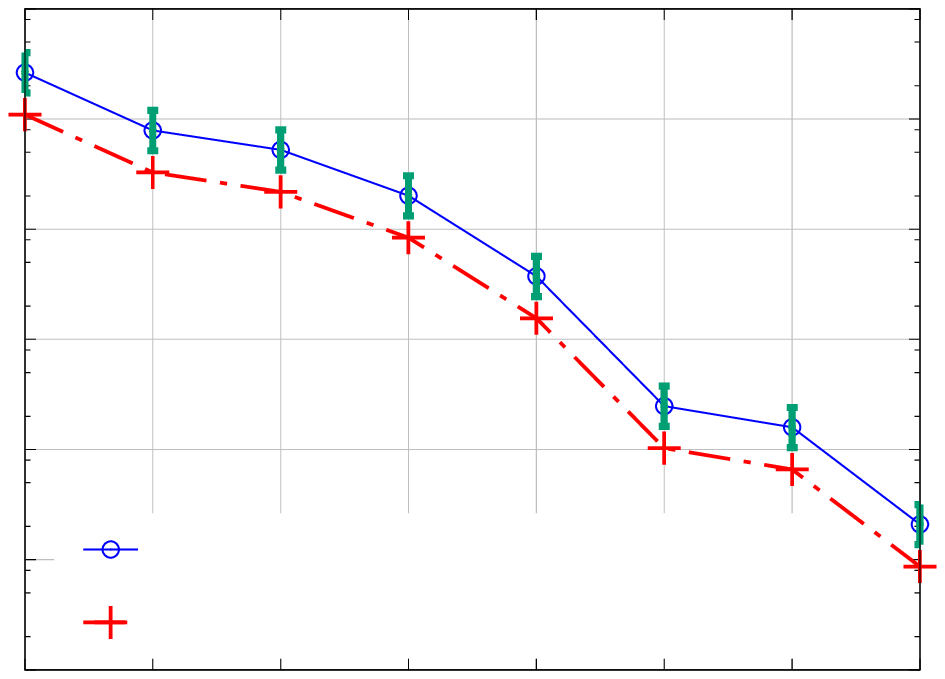}}}}
    \caption{average error estimates over $t=5$}
    \label{fig:sheat_MCrelAveErr}
  \end{subfigure}
\end{center}
\caption{Heat equation: The mean, minimum, and maximum of the quantities \eqref{eq:Err_TimeProbBnd} and \eqref{eq:Err_AveRelErrProbBnd} computed from 100 samples of the learned error estimator are shown. It is observed that there is low variation among the samples of the learned error estimator.} \label{fig:sheat_MC}
\end{figure}

\subsection{Convection-diffusion in a pipe} \label{subsec:ConvDiff2D}

The setup for this problem is first described followed by the numerical results for two types of control inputs and bounds on the output error.

\subsubsection{Setup}

We now consider a parabolic PDE over a 2-D spatial domain according to the convection-diffusion equation. Let $\Tcal = (0,0.5)$ and $\PDEdomain = (0,1) \times (0,0.25)$. For $(x_1,x_2,t) \in \PDEdomain \times \Tcal$, the PDE examined is
\begin{align} \label{eq:ConvDiffPDE}
\hspace{-1in}   \frac{\partial}{\partial t} w(x_1,x_2,t) &= \nabla \cdot (\mu \nabla w(x_1,x_2,t)) - (1,1) \cdot \nabla w(x_1,x_2,t), \\
\hspace{-1in}   w(x_1,x_2,t) & = 0 \text{\,\, for \,\,} (x_1,x_2) \in \partial \Omega \backslash \cup_{i=1}^5 E_i, \notag \\
\hspace{-1in}  \nabla w(x_1,x_2,t) \cdot \mathbf{n} & = u_i(t) \text{\,\, for \,\,} (x_1,x_2) \in E_i, i = 1,\dots, 5, \notag \\
\hspace{-1in}  w(x_1,x_2,0) & = 0. \notag
\end{align}
where the domain $\Omega$ and the segments $E_i$, $i=1,\dots,5$ with Neumann conditions are visualized in Figure~\ref{fig:convdiff2Ddomain}.

\begin{figure}[ht!]
    \centering
    \includegraphics[width=0.6\columnwidth]{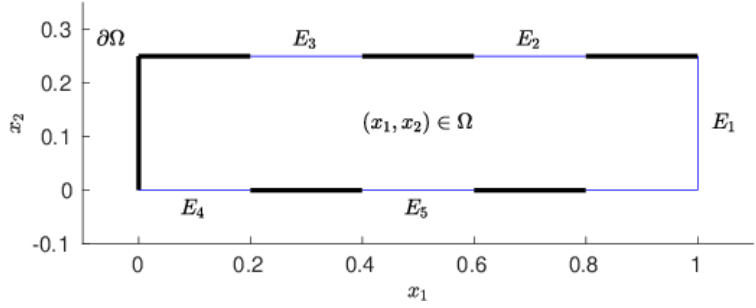}
\caption{Domain $\Omega$ for the convection-diffusion PDE in Section~\ref{subsec:ConvDiff2D} with segments of the boundary with Neumann conditions indicated by thin solid lines.}
\label{fig:convdiff2Ddomain}
\end{figure}

The finite element discretization is constructed using square elements with width $\Delta x_1 = \Delta x_2= 1/75$ and associated linear hat basis functions $\{\varphi_i(x_1,x_2)\}_{i=1}^N$ where $N = 1121$. The continuous-time system resulting from this PDE is
$$\bfM \frac{d \bfw(t)}{dt} = \bfK \bfw(t) + \bfF \bfu(t)$$
where $\bfM$ is the mass matrix as before, $[\bfK]_{ij} = -\mu\int_{\Omega} \nabla \varphi_j \cdot \nabla \varphi_i \,d \bfx - \int_{\Omega} ((1,1) \cdot \nabla \varphi_j) \varphi_i \,d \bfx$ for $i,j=1,\dots,N$ and $[\bfF]_{ij} = \mu \int_{E_j} \varphi_i \,d\bfx$ for $i=1,\dots,N, j=1,\dots,5$. This was then discretized using forward Euler with the time step size $\delta t =10^{-5}$.

Two variants of this problem are investigated in Sections~\ref{subsubsec:ConvDiffEasy} and~\ref{subsubsec:ConvDiffHard} in which we implemented different pairs of control signals $(\bfubasis(t),\bfutest(t))$ in each variation. The same control input $\bfutrain(t)$ is used to solve the optimization problems~\eqref{eq:OpInf} and~\eqref{eq:DataMatErrInf} for both variants which is discretized to obtain $\bfGtrain$. The trajectory $\bfGtrain$ was simulated as follows: for the time points $\{t_k\}_{k=0}^K$, $K = 5\times 10^4$, $[\bfg_k]_j$ is a realization of $Z_k^{(j)} \sim N(0,\sin^2 (j \pi t_k))$ such that $Z_k^{(j)},Z_l^{(j)}$ are independent for $k,l = 1,\dots,K$, $k \neq l$.

\subsubsection{Results for exponentially growing sinusoidal control input} \label{subsubsec:ConvDiffEasy}

The diffusivity parameter in this example is set to $\mu = 0.5$. The basis $\bfV_{\nr}$ is constructed using the control input
$\ubasis_j(t) = \sin(2jt), j =1,\dots,5$ while the control input $\utest_j(t) = e^t\sin(1.75jt), j = 1,\dots,5$ is used for prediction in the online stage. Both of these control inputs are discretized in time using $K=5\times 10^4$ (basis) and $J = 5\times 10^4$ (prediction) intervals of equal width. To visualize trajectories of the high-dimensional system resulting from the control input $\bfutest(t)$, Fig~\ref{fig:convDiffEasySurf} illustrates $w(x_1,x_2,t)$ for $t = 0.1,0.5$.

\begin{figure}
  \begin{subfigure}[b]{0.45\textwidth}
    \begin{center}
{{\LARGE\resizebox{0.9\columnwidth}{!}{\input{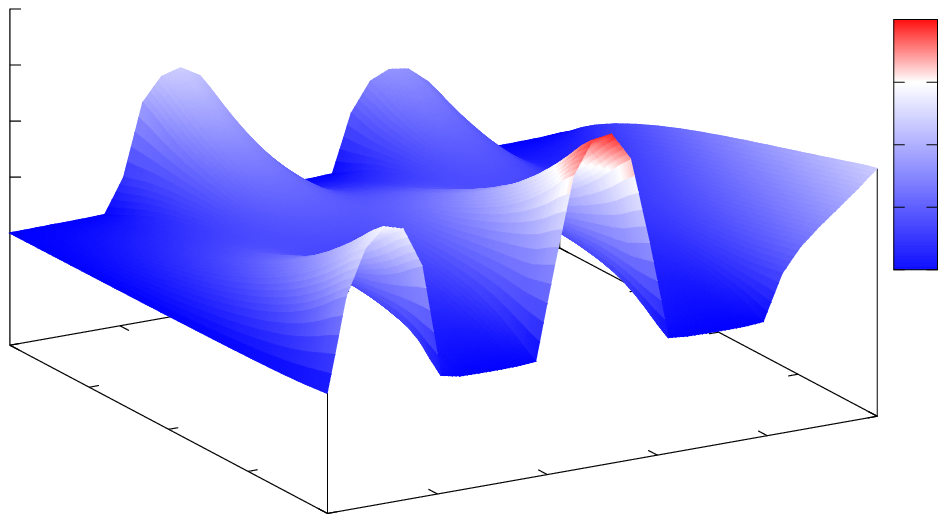}}}}
\end{center}
    \caption{high-dimensional system, $t=0.1$}
  \end{subfigure}
  \begin{subfigure}[b]{0.45\textwidth}
    \begin{center}
{{\LARGE\resizebox{0.9\columnwidth}{!}{\input{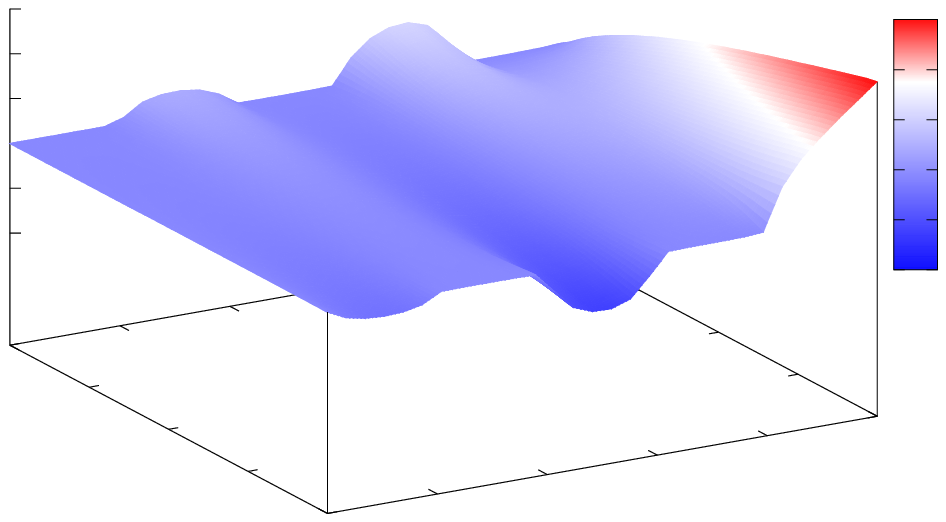}}}}
\end{center}
    \caption{high-dimensional system, $t=0.5$}
  \end{subfigure}
  \caption{Convection-diffusion equation: Numerical approximation of the solution to~\eqref{eq:ConvDiffPDE} at times $t = 0.1$ and $t=0.5$ for $\mu=0.5$ and control input $\bfGtest$.}
  \label{fig:convDiffEasySurf}
\end{figure}

We now examine the accuracy of the inferred reduced model and its state error estimate under operator inference by computing the errors listed above. The quantity~\eqref{eq:Err_IntVSNonIntUnitNorm} corresponding to the intrusive  and operator inference approach as a function of the basis dimension $\nr$ is contrasted in Figure~\ref{fig:convDiffeasy_residual}. The plot demonstrates the recovery of the residual norm \eqref{eq:SqResNorm} in the latter method. The reduced system operators for both methods are also almost identical.

\begin{figure}
    \begin{center}
{{\LARGE\resizebox{0.45\columnwidth}{!}{\input{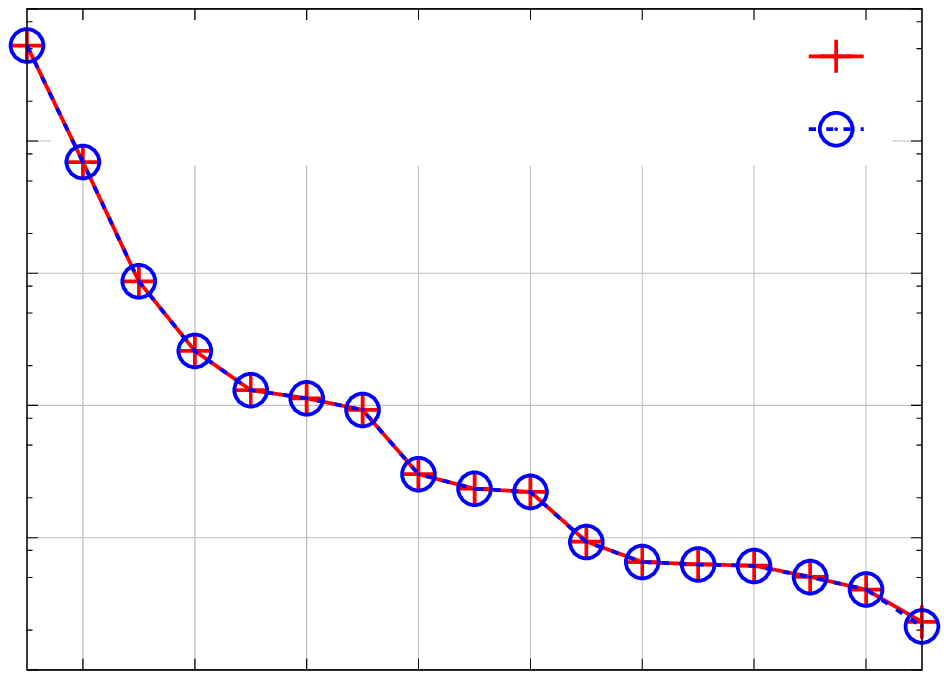}}}}
    \end{center}
    \caption{Convection-diffusion equation (Section~\ref{subsubsec:ConvDiffEasy}): The graph indicates that the residual norm \eqref{eq:SqResNorm} needed for the \emph{a posteriori} estimate \eqref{eq:aposterioribnd} can be recovered under operator inference.}
    \label{fig:convDiffeasy_residual}
\end{figure}

We then investigate the effect of the parameters $\gamma$ and $M$ in the learned error estimator \eqref{eq:RealizedStateBnd} in Figures~\ref{fig:convDiffeasy_varGamma} and \ref{fig:convDiffeasy_varM}. Figures~\ref{fig:convDiffeasy_varGamma01} and~\ref{fig:convDiffeasy_varGamma05} depict the learned reduced model error~\eqref{eq:Err_TimeActualErr} and the intrusive \eqref{eq:Err_TimeActualBnd} and learned (probabilistic) \eqref{eq:Err_TimeProbBnd} error estimates at times $t = 0.1$ and $t=0.5$. Each panel presents 3 realizations of the probabilistic error estimator~\eqref{eq:ProbErrBound} using $M=10$ and $\gamma = 7,20,50$ with their respective lower bound probabilities of $\ProbLB \approx 0.7543, 0.9985, 0.9999$. The same set of realizations of $\Xi_l$ for $l=1,\dots,J$ were utilized for the values of $\gamma$ considered.
For fixed $M$, the learned error estimates become more conservative with respect to the intrusive error estimate in favor of increased confidence in the estimate; cf.~the definition of $\Xi_l$ in Proposition~\ref{prop:GeneralStateErrBnd}. 

\begin{figure}
  \begin{subfigure}[b]{0.45\textwidth}
    \begin{center}
{{\LARGE\resizebox{1\columnwidth}{!}{\input{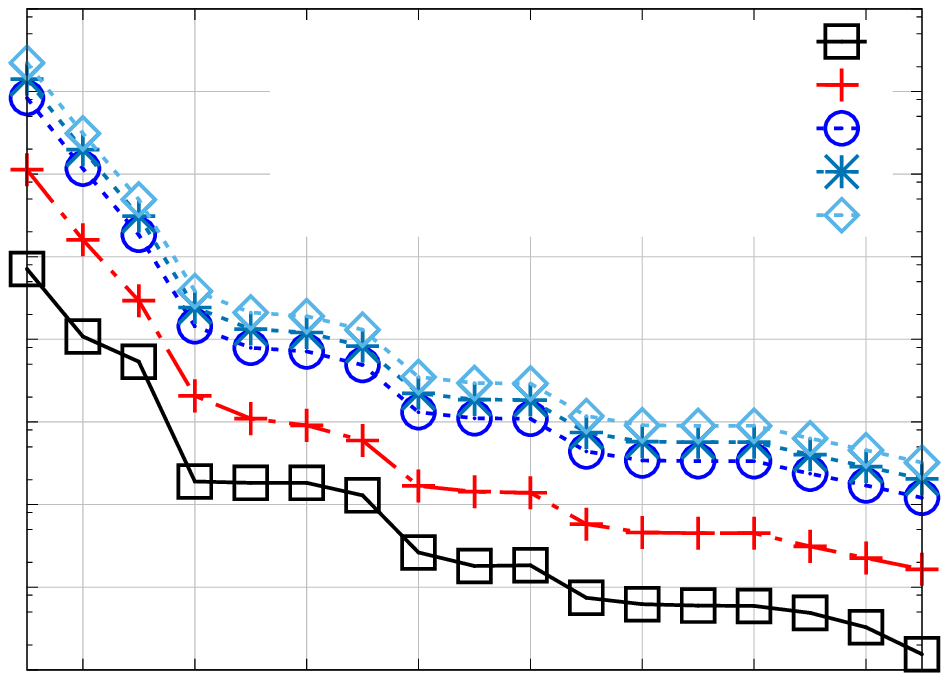}}}}
\end{center}
    \caption{error and error estimates at $t=0.1$ }
    \label{fig:convDiffeasy_varGamma01}
  \end{subfigure}
  \begin{subfigure}[b]{0.45\textwidth}
    \begin{center}
{{\LARGE\resizebox{1\columnwidth}{!}{\input{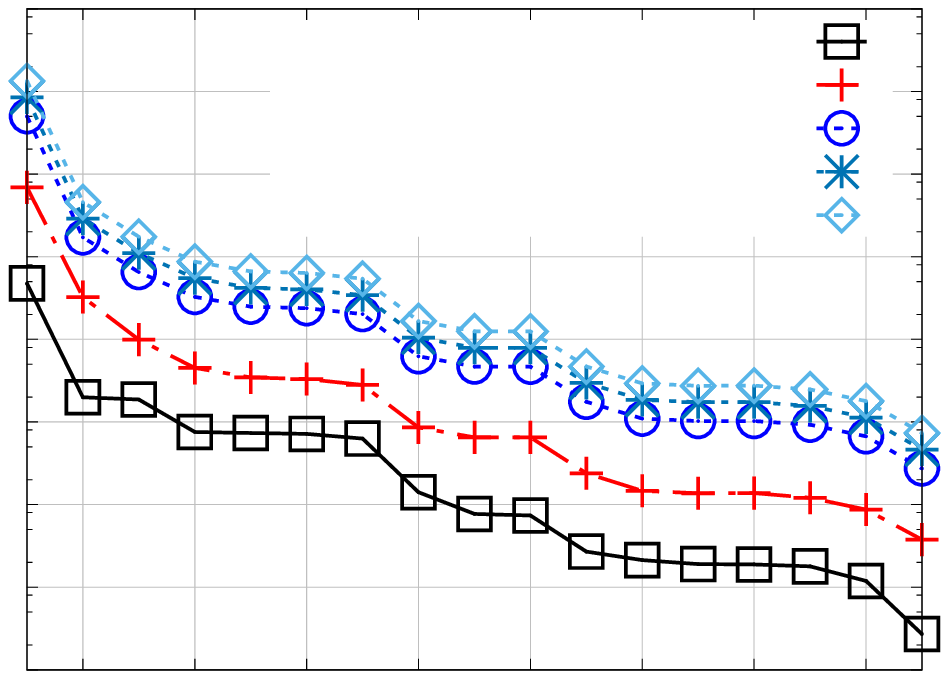}}}}
\end{center}
    \caption{error and error estimates at $t=0.5$ }
    \label{fig:convDiffeasy_varGamma05}
  \end{subfigure}
  \caption{Convection-diffusion equation (Section~\ref{subsubsec:ConvDiffEasy}). Increasing $\gamma$ leads to a more conservative learned error estimate~\eqref{eq:RealizedStateBnd} for fixed $M,J$ with respect to the intrusive error estimate. This simultaneously corresponds to a larger lower bound probability $\ProbLB$. The parameters used were
  $M=10$ and $\gamma = 7,20,50$ for which $\ProbLB \approx 0.75,0.99,0.99$.}
  \label{fig:convDiffeasy_varGamma}
\end{figure}

Figure~\ref{fig:convDiffeasy_varM} plots the same quantities shown in Figure~\ref{fig:convDiffeasy_varGamma} but for the parameters $\gamma = 1$ and $M = 35,100,500$, i.e. $M$ is varied while $\gamma$ is fixed. The lower bound probability values for each $M$ are $\ProbLB \approx 0.9212, 0.9999, 1$. The sets of the $M=35,100,500$ realizations of $\bfTheta^{(l)}$ for $l=1,\dots,J$ are nested. 
For this example, increasing $M$ did lead only to slight changes in the learned error estimate. The influence of $M$ is more difficult to gauge numerically since the maximum of $\{\bftheta_i^{(l)}\}_{i=1}^M$ may not differ substantially as a function of $M$.
The results indicate that in this example, for a fixed value for $\ProbLB$, it is more favorable to choose a larger value of $M$ and a smaller value of $\gamma$ to obtain a tighter learned error estimate that is close to the error estimate from intrusive model reduction with a high confidence in the estimate.

\begin{figure}
  \begin{subfigure}[b]{0.45\textwidth}
    \begin{center}
{{\LARGE\resizebox{1\columnwidth}{!}{\input{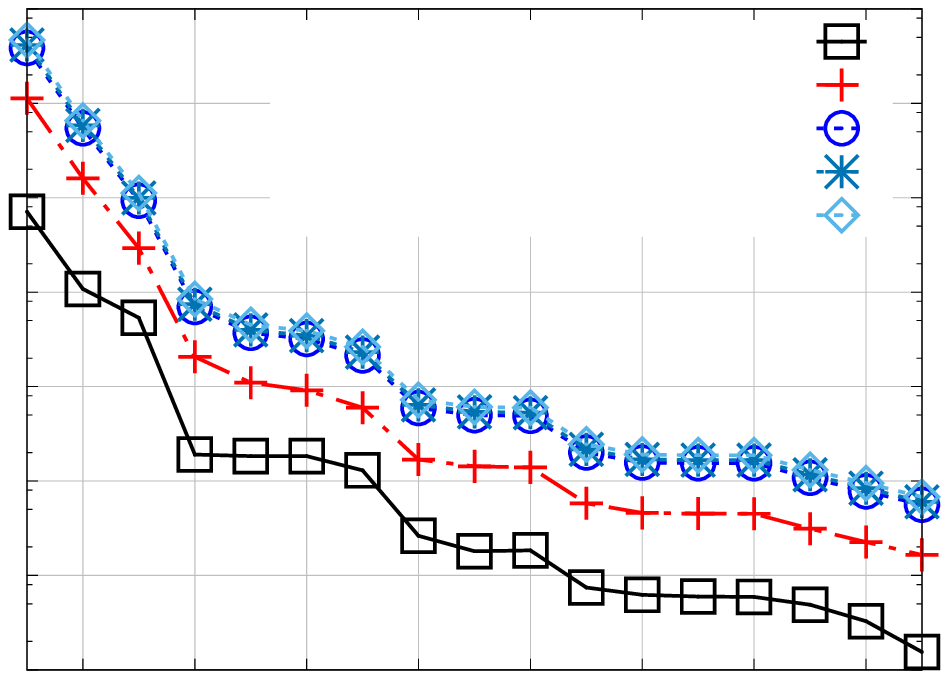}}}}
\end{center}
    \caption{error and error estimates at $t=0.1$}
    \label{fig:convDiffeasy_varM01}
  \end{subfigure}
  \begin{subfigure}[b]{0.45\textwidth}
    \begin{center}
{{\LARGE\resizebox{1\columnwidth}{!}{\input{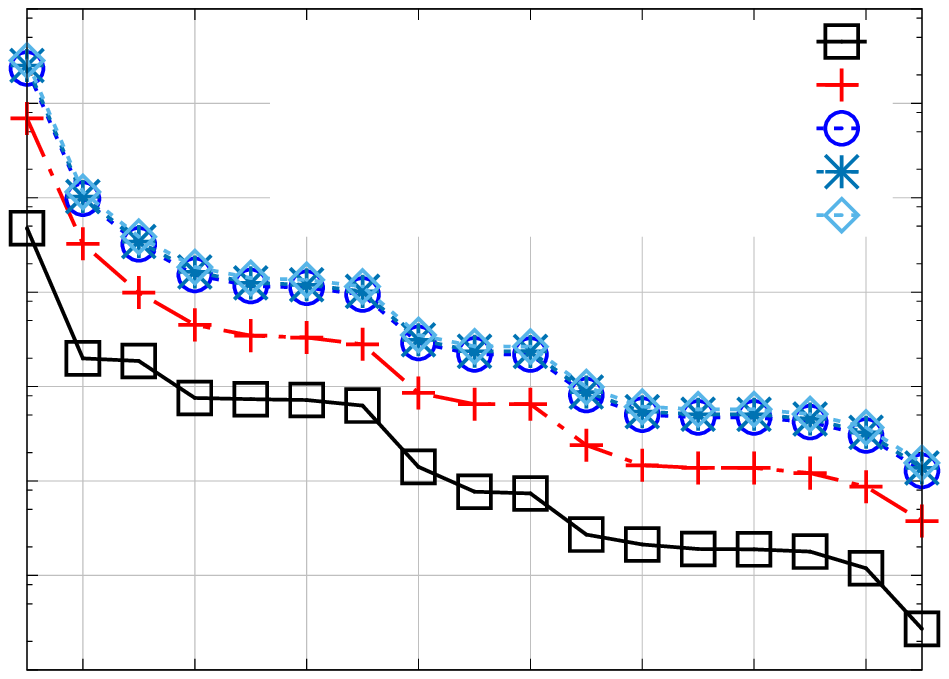}}}}
\end{center}
    \caption{error and error estimates at $t=0.5$}
    \label{fig:convDiffeasy_varM05}
  \end{subfigure}
  \caption{Convection-diffusion equation (Section~\ref{subsubsec:ConvDiffEasy}). Increasing $M$ only slightly changes the learned error estimate \eqref{eq:RealizedStateBnd} for fixed $\gamma,J$ in this example. The parameters used were
  $\gamma=1$ and $M = 35,100,500$ for which $\ProbLB \approx 0.92,0.99,1$.}
  \label{fig:convDiffeasy_varM}
\end{figure}

We now assess the variation in the realizations of the learned error estimator. The simulation is carried out for $\gamma=1,M=35$. We generated 50 sets of $M = 35$ realizations of $\bfTheta^{(l)}$ to produce 50 realizations of $\Xi_l$ and
of the learned error estimate~\eqref{eq:Err_TimeProbBnd}. The mean (solid) of the 50 realizations of \eqref{eq:Err_TimeProbBnd} for $t=0.1$ and $t=0.5$ are illustrated in the panels of Figure~\ref{fig:convDiffeasy_ErrBar} together with their minimum and maximum values (vertical bars). For reference,  the error estimate \eqref{eq:Err_TimeActualErr} under intrusive model reduction is also included. The plots show that the variation among samples of the learned error estimator is low.

\begin{figure}
  \begin{subfigure}[b]{0.45\textwidth}
    \begin{center}
{{\LARGE\resizebox{1\columnwidth}{!}{\input{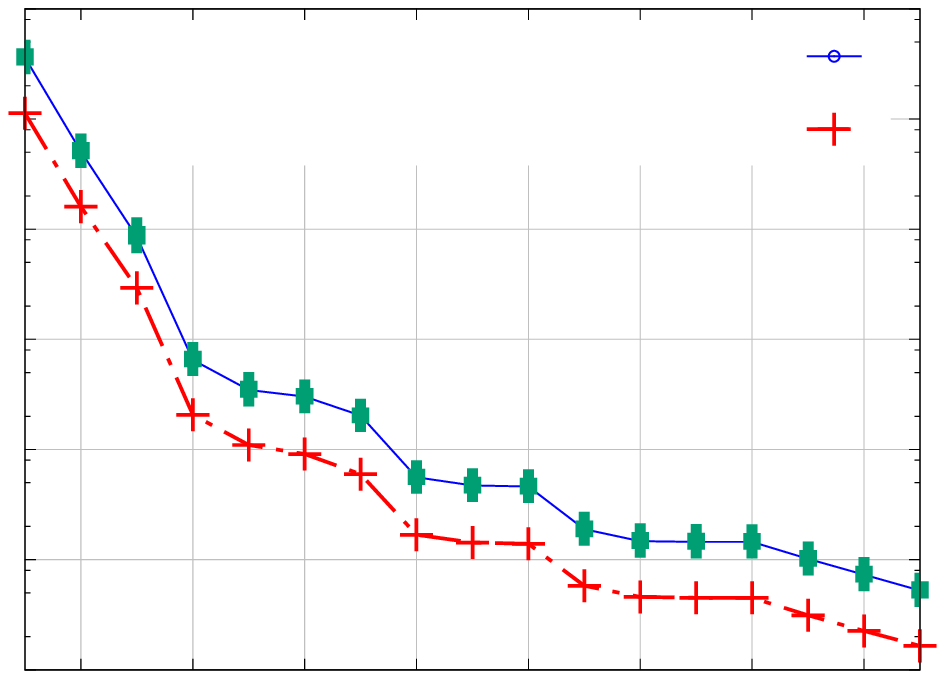}}}}
\end{center}
    \caption{error estimates at $t=0.1$}
    \label{fig:convDiffeasy_ErrBar01}
  \end{subfigure}
  \begin{subfigure}[b]{0.45\textwidth}
    \begin{center}
{{\LARGE\resizebox{1\columnwidth}{!}{\input{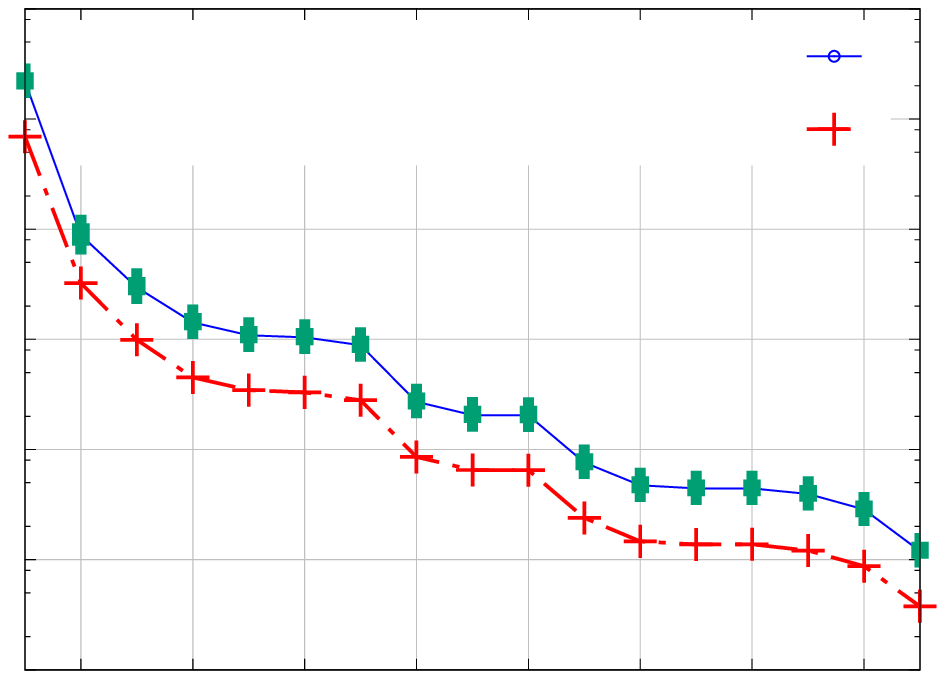}}}}
\end{center}
    \caption{error estimates at $t=0.5$}
    \label{fig:convDiffeasy_ErrBar05}
  \end{subfigure}
  \caption{Convection-diffusion equation (Section~\ref{subsubsec:ConvDiffEasy}). The mean, minimum, and maximum of~\eqref{eq:Err_TimeProbBnd} based on 50 samples of the learned error estimator are depicted together with the error estimate from intrusive model reduction. The parameters used were $\gamma=1,M=35$. There is a low variation among the samples of the learned error estimator.}
  \label{fig:convDiffeasy_ErrBar}
\end{figure}

\subsubsection{Results for sinusoidal control input} \label{subsubsec:ConvDiffHard}

In this case, the diffusivity parameter is set to $\mu=1$. The control input $\bfubasis(t)$ for constructing the basis $\bfV_{\nr}$ consists of
\begin{align} \label{eq:SysSignal}
& \ubasis_1(t) = 5t \sin (\pi t)\\
& \ubasis_2(t) = e^{5t} \sin(2 \pi t)\notag \\
& \ubasis_3(t) = \sqrt{3+t^2} \sin (3 \pi t) \notag \\
& \ubasis_4(t) = 50t^2 \sin (4 \pi t) \notag \\
& \ubasis_5(t) = e^{2t} \sin(5 \pi t) \notag
\end{align}
while the components of the control input $\bfutest(t)$ for prediction were chosen as $\utest_j(t) =  \sin(j\pi t z_j), j =1,\dots,5$ where $z_j$ is a realization of a $N(0,1)$ random variable.

Figure~\ref{fig:convDiffHard_residual_relAveErr} summarizes the predictive capabilities of operator inference. The quantity \eqref{eq:Err_IntVSNonIntUnitNorm} is plotted in Figure~\ref{fig:convDiffHard_residual} wherein we see concordance between the intrusive and operator inference approaches. Figure~\ref{fig:convDiffHard_relAveErr} contains graphs of the learned reduced model error \eqref{eq:Err_AveRelErrActualErr} and the intrusive \eqref{eq:Err_AveRelErrActualBnd} and learned (probabilistic) \eqref{eq:Err_AveRelErrProbBnd} error estimates in which 1 sample of the learned error estimator was generated. The parameters for the learned error estimator were set to $\gamma = 1,M=40, J =5 \times 10^4$ so that $\ProbLB(\gamma,M,J) \approx 0.9883$. The learned error estimate is close to the error estimate given by the intrusive approach.

\begin{figure}
  \begin{subfigure}[b]{0.45\textwidth}
    \begin{center}
{{\LARGE\resizebox{1\columnwidth}{!}{\input{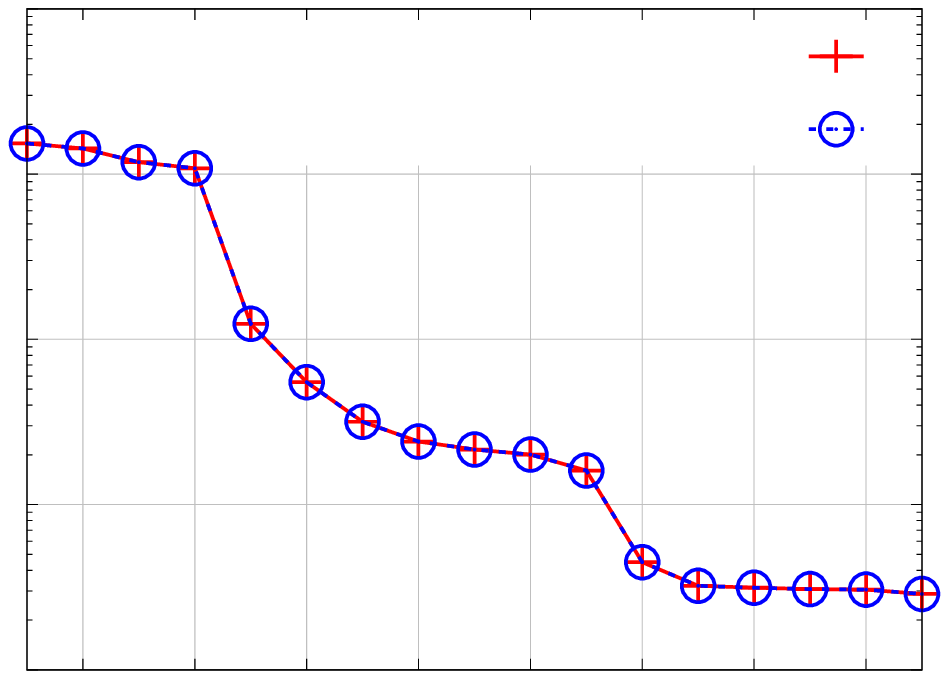}}}}
\end{center}
    \caption{test, intrusive vs operator inference}
    \label{fig:convDiffHard_residual}
  \end{subfigure}
  \begin{subfigure}[b]{0.45\textwidth}
    \begin{center}
{{\LARGE\resizebox{1\columnwidth}{!}{\input{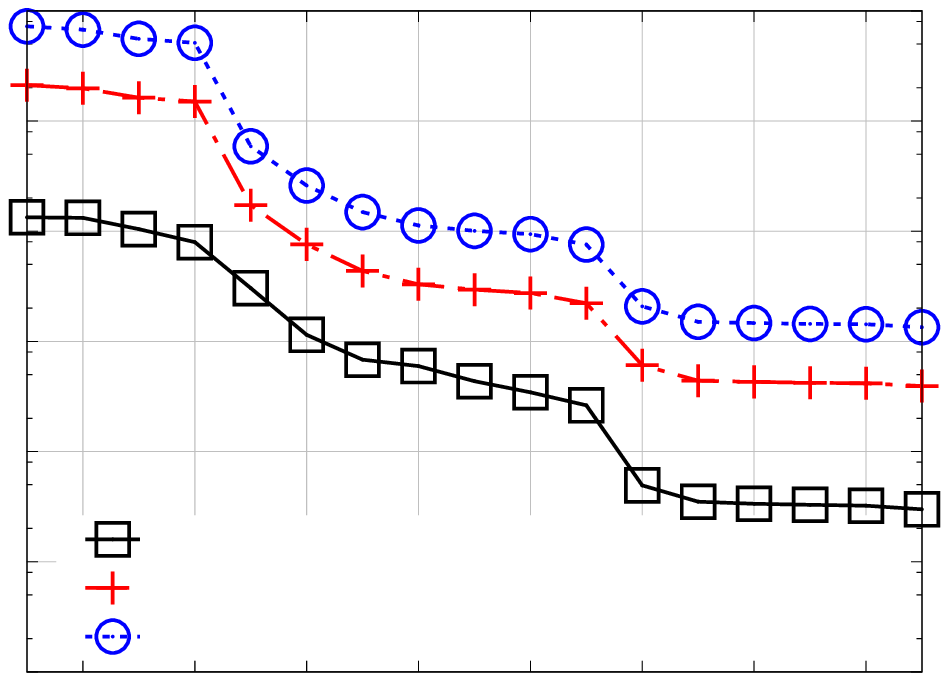}}}}
\end{center}
    \caption{$\gamma=1,M=40, J=5\times 10^4, \ProbLB \approx 0.9883$}
    \label{fig:convDiffHard_relAveErr}
  \end{subfigure}
  \caption{Convection-diffusion equation (Section~\ref{subsubsec:ConvDiffHard}). The panels show that operator inference recovers the residual norm necessary for deriving error estimates of the state. Furthermore, the learned state error estimate is only slightly higher than the error estimate provided by the intrusive method.}
  \label{fig:convDiffHard_residual_relAveErr}
\end{figure}

\subsubsection{Results for bound on output error}
We now study the efficiency of  the learned error estimator for the state in constructing bounds for an output. We resume the setup in the previous subsection wherein the control input is sinusoidal. We consider two quantities of interest for this case, namely, $y_k^{(j)} = \bfC^{(j)} \bfw_k$ for $j \in \{1,2\}$ with the control input $\bfGtest$. The matrices $\bfC^{(1)}$ and $\bfC^{(2)}$ are defined as follows: The first output is the average of the state components at each time $\bfw_k$ which is
        \begin{align} \label{eq:OutputAve}
            y_k^{(1)} = \bfC^{(1)} \bfw_k \quad \text{where} \quad \bfC^{(1)} = \biggl[\frac{1}{N},\dots,\frac{1}{N} \biggr] \in \R^{1 \times N}.
        \end{align}
The second output is the integral of the finite element approximation to $w(\bfx,t)$ over the edge $E_5$ at each time given by
        \begin{align} \label{eq:OutputIntegral}
            y_k^{(2)} = \bfC^{(2)} \bfw_k \quad \text{where} \quad \bfC^{(2)} = \biggl[\int_{E_5} \varphi_1 \,d\Gamma,\dots,\int_{E_5} \varphi_N \,d\Gamma \biggr] \in \R^{1 \times N}.
        \end{align}

The output $y_k$ and its bounds $\ROMOutput_k \pm \OutErrBnd_k$ over time are displayed in Figures~\ref{fig:convDiffHard_Output1} (first output) and~\ref{fig:convDiffHard_Output2} (second output); cf.~Remark~\ref{rm:Output}. These quantities were sketched for $\nr \in \{7,12,17\}$ basis dimensions in the first output and $\nr \in \{5,10,15\}$ in the second output. The output bound $\OutErrBnd_k$ is computed via the learned error estimator for the state $\StErrBnd_k$ with the same parameters above, i.e. $\gamma=1,M=40,J = 5\times 10^4$. The panels show that increasing $n$ yields a decrease in the output bound width  $2\OutErrBnd_k$ over time, i.e. the bounds are sharper with respect to the output value. This is supported by Figure~\ref{fig:convDiffHard_residual_relAveErr} which demonstrates decrease of the learned state error estimate as a function of the basis dimension.

\begin{figure}
  \begin{subfigure}[b]{0.3\textwidth}
    \begin{center}
{{\huge\resizebox{1.1\columnwidth}{!}{\input{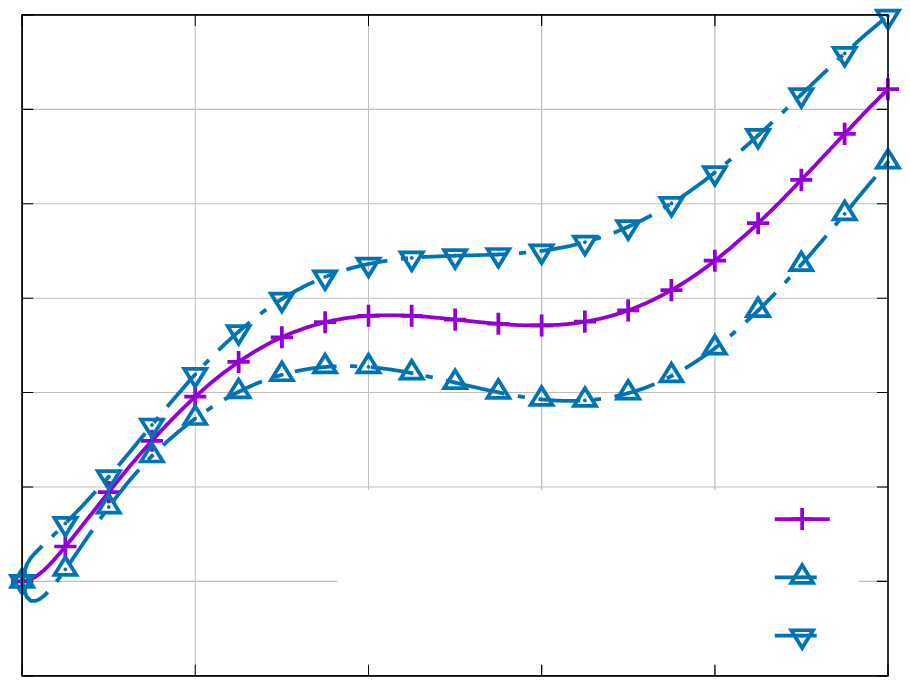}}}}
\end{center}
    \caption{$n=17$ bases}
    \label{fig:convDiffHard_Output1_17basis}
  \end{subfigure}
  \begin{subfigure}[b]{0.3\textwidth}
   \begin{center}
{{\huge\resizebox{1.1\columnwidth}{!}{\input{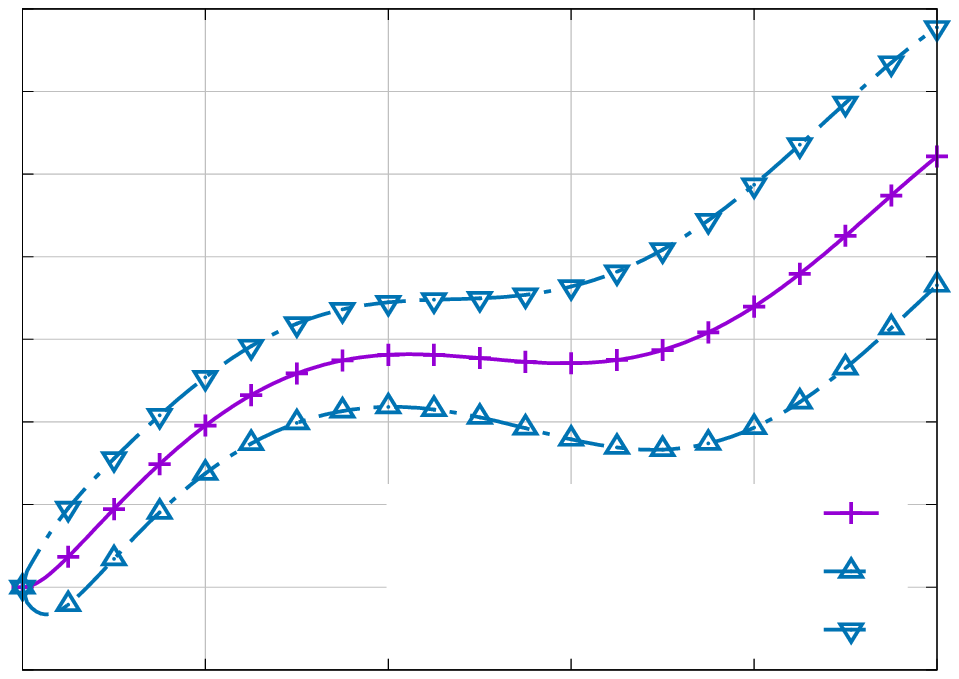}}}}
\end{center}
    \caption{$n=12$ bases}
    \label{fig:convDiffHard_Output1_12basis}
  \end{subfigure}
  \begin{subfigure}[b]{0.3\textwidth}
\begin{center}
{{\huge\resizebox{1.1\columnwidth}{!}{\input{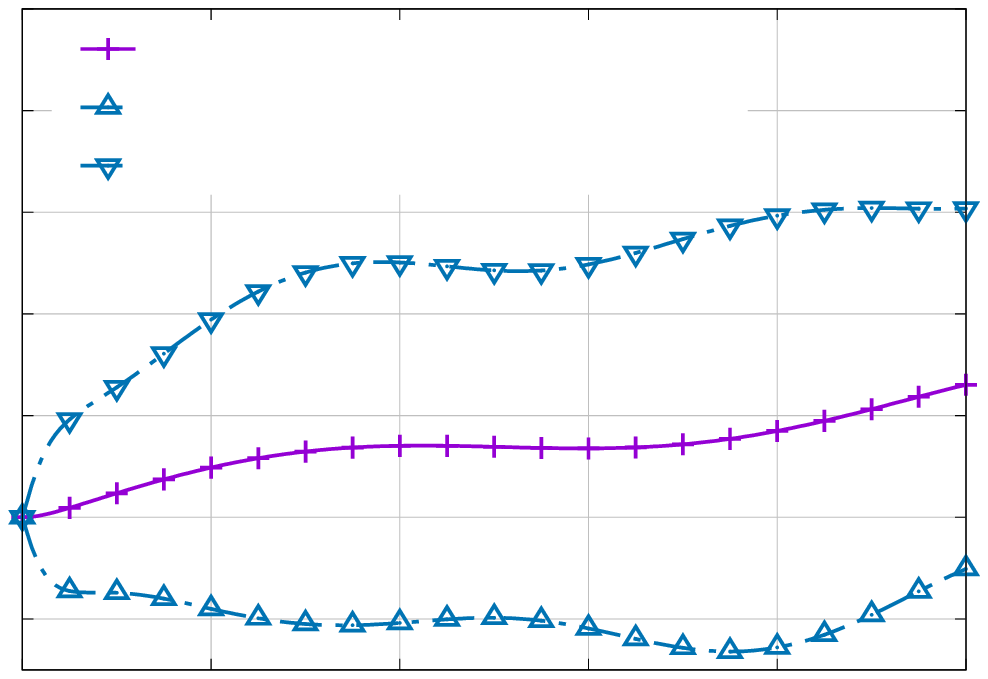}}}}
\end{center}
    \caption{$n=7$ bases}
    \label{fig:convDiffHard_Output1_7basis}
  \end{subfigure}
  \caption{Convection-diffusion equation (Section~\ref{subsubsec:ConvDiffHard}). The panels show the predictive capability of the learned error estimator for the state error in constructing lower and upper bounds for the output (average of state components). The bounds correctly indicate that the errors of the predicted reduced-model outputs decreases if the basis dimension is increased.}
  \label{fig:convDiffHard_Output1}
\end{figure}

\begin{figure}
  \begin{subfigure}[b]{0.3\textwidth}
    \begin{center}
{{\huge\resizebox{1.1\columnwidth}{!}{\input{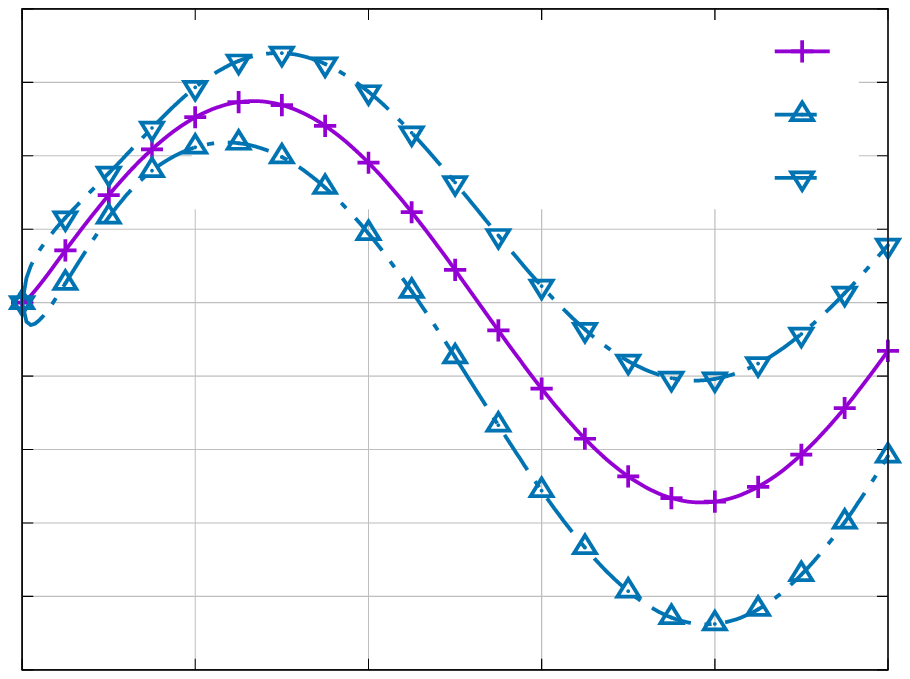}}}}
\end{center}
    \caption{$n=15$ bases}
    \label{fig:convDiffHard_Output2_15basis}
  \end{subfigure}
  \begin{subfigure}[b]{0.3\textwidth}
   \begin{center}
{{\huge\resizebox{1.1\columnwidth}{!}{\input{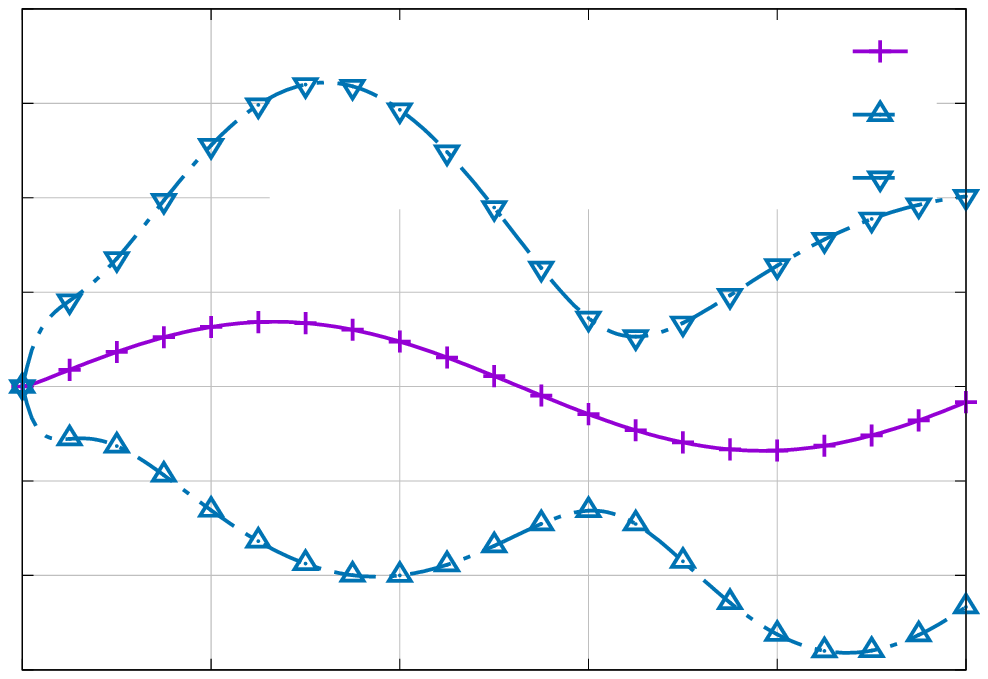}}}}
\end{center}
    \caption{$n=10$ bases}
    \label{fig:convDiffHard_Output2_10basis}
  \end{subfigure}
  \begin{subfigure}[b]{0.3\textwidth}
\begin{center}
{{\huge\resizebox{1.1\columnwidth}{!}{\input{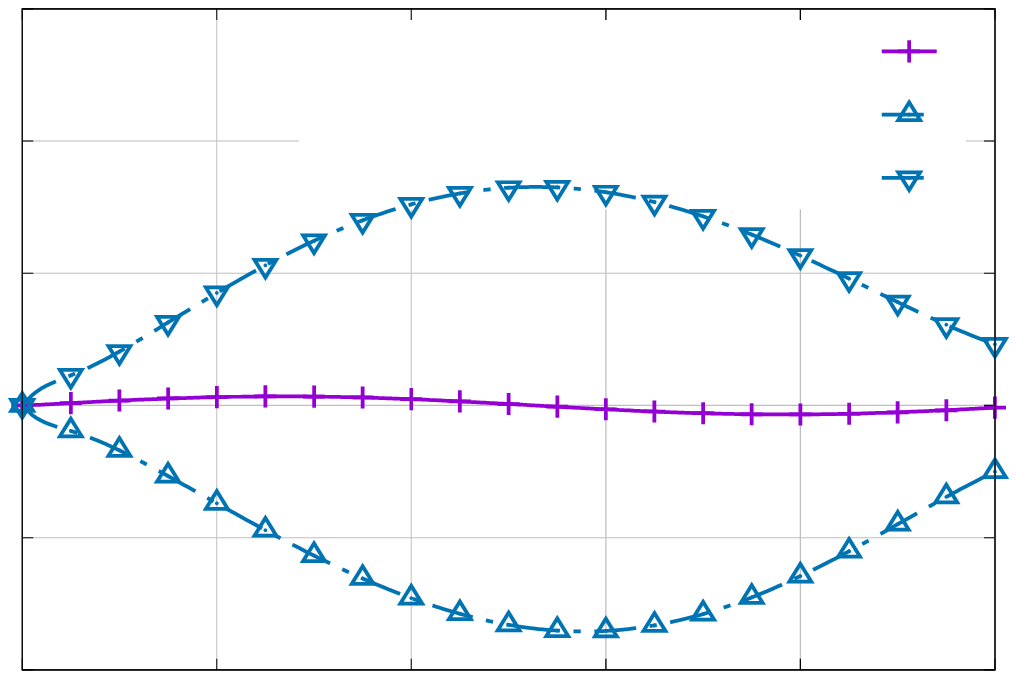}}}}
\end{center}
    \caption{$n=5$ bases}
    \label{fig:convDiffHard_Output2_5basis}
  \end{subfigure}
  \caption{Convection-diffusion equation (Section~\ref{subsubsec:ConvDiffHard}). Similar behavior as described in Figure~\ref{fig:convDiffHard_Output1} is observed in these panels for the quantity of interest based on the integral over the Neumann boundary.}
  \label{fig:convDiffHard_Output2}
\end{figure}

\section{Conclusions} \label{sec:Concl}
This work proposes a probabilistic \emph{a posteriori} error estimator that is applicable with non-intrusive model reduction under certain assumptions. The key is that quantities that are necessary for error estimators developed for intrusive model reduction can be derived via least-squares regression from input and solution trajectories whereas other quantities that are necessary can be bounded in a probabilistic sense by sampling the high-dimensional system in a judicious and black-box way. The learned estimators can be used to rigorously upper bound the error of reduced models learned from data for initial conditions and inputs that are different than during training (offline phase).
Thus, the proposed approach establishes trust in decisions made from data by realizing the full workflow from data to reduced models to certified predictions.

\section*{Acknowledgments}
This work was partially supported by US Department of Energy, Office of Advanced Scientific Computing Research, Applied Mathematics Program (Program Manager Dr. Steven Lee), DOE Award DESC0019334, and by the National Science Foundation under Grant No.~1901091.

\bibliographystyle{abbrv}
\bibliography{linest.bib}

\end{document}